\newtheorem{theorem}{Theorem}
\newtheorem{lemma}[theorem]{Lemma}
\newtheorem{proposition}[theorem]{Proposition}
\theoremstyle{definition}
\newtheorem{rem}[theorem]{Remark}
\long\def\symbolfootnote[#1]#2{\begingroup
\def\thefootnote{\fnsymbol{footnote}}\footnote[#1]{#2}\endgroup}
\newcommand{\sg}{\sigma}
\newcommand{\cref}[1]{Corollary \ref{corollary:#1}}
\newcommand{\fig}[2]{\begin{figure}[ht]
\centerline{\scalebox{.66}{\epsfig{file=#1.eps}}}
\caption{#2}
\label{fig:#1}
\end{figure}}
\title{Stieltjes moment sequences of polynomials}
\author{
Huyile Liang \\
\small School of Mathematical Sciences\\[-0.8ex]
\small Dalian University of Technology\\[-0.8ex]
\small Dalian 116024, PR China\\[-0.8ex]
\small \texttt{lianghuyile@hotmail.com}
\and
Jeffrey Remmel \\
\small Department of Mathematics\\[-0.8ex]
\small University of California, San Diego\\[-0.8ex]
\small La Jolla, CA 92093-0112. USA\\[-0.8ex]
\small \texttt{remmel@math.ucsd.edu}
\and
Sainan Zheng \\
\small School of Mathematical Sciences\\[-0.8ex]
\small Dalian University of Technology\\[-0.8ex]
\small Dalian 116024, PR China\\[-0.8ex]
\small \texttt{zhengsainandlut@hotmail.com}
}
\date{\small Submitted: Date 1;  Accepted: Date 2;
 Published: Date 3.\\
\small MR Subject Classifications: 05A15, 05E05}
\begin{document}
\maketitle
 This paper is dedicated to the memory of Professor Jeff Remmel, who recently passed away.
\begin{abstract}
\noindent

A sequence $(a_n)_{n \geq 0}$ is Stieltjes moment sequence if it has the form
$a_n = \int_0^\infty x^n d\mu(x)$ for $\mu$ is a nonnegative measure
on $[0,\infty)$. It is known that
$(a_n)_{n \geq 0}$ is a Stieltjes moment sequence if and only if
the matrix $H =[a_{i+j}]_{i,j \geq 0}$ is totally positive, i.e., all
its minors are nonnegative. We define a
sequence of polynomials in $x_1,x_2,\ldots,x_n$ $(a_n(x_1,x_2,\ldots,x_n))_{n \geq 0}$ to be
 a Stieltjes moment sequence of polynomials if
the matrix $H =[a_{i+j}(x_1,x_2,\ldots,x_n)]_{i,j \geq 0}$ is $x_1,x_2,\ldots,x_n$-totally positive, i.e., all
its minors are polynomials in $x_1,x_2,\ldots,x_n$ with nonnegative coefficients.
The main goal of this paper is to
produce a large class of  Stieltjes moment sequences of polynomials
by finding multivariable analogues of Catalan-like numbers as defined by
Aigner.
\end{abstract}

\section{Introduction}
A sequence $(a_n)_{n \geq 0}$ is a {\em Stieltjes moment sequence} if it has
the form
$$a_n = \int_0^\infty x^n d\mu(x)$$ where
$\mu$ is a nonnegative measure on $[0,\infty)$.
There are several other characterizations of  Stieltjes moment sequences.
For example, in \cite[Theorem 1.3]{ST43}, it is proved that a
sequence $(a_n)_{n \geq 0}$ is a Stieltjes moment sequence if and only
if both the matrices $[a_{i+j}]_{0 \leq i,j \leq n}$ and
$[a_{i+j+1}]_{0 \leq i,j \leq n}$ are positive semidefinite for all $n \geq 0$ (see \cite{Min88,Pin10}).

Another characterization Stieltjes moment sequences comes from
the theory of total positivity.
Let $A = [a_{n,k}]_{n,k \geq 0}$
be a finite or infinite matrix. We say that
$A$ is {\em totally positive of order $r$} if all its minors
of order $1,2, \ldots, r$ are nonnegative and we say that
$A$ is {\em totally positive} if it is totally positive of order
$r$ for all $r \geq 1$ (see \cite{Aig01,Ben11,Bre95,Bre96,CLW-EuJC,CLW-LAA} for instance).
Given a sequence $ \alpha = (a_n)_{n \geq 0}$,
we define the {\em Hankel matrix of $\alpha$}, $H(\alpha)$, by
$$H(\alpha) = [a_{i+j}]_{i,j \geq 0} =
\begin{bmatrix} a_0 & a_1 & a_2 & a_3 & \cdots \\
a_1 & a_2 & a_3 & a_4 & \cdots \\
a_2 & a_3 & a_4 & a_5 & \cdots \\
a_3 & a_4 & a_5 & a_6 & \cdots\\
\vdots&\vdots&\vdots&\vdots&\ddots
\end{bmatrix}.
$$
Then it is proved in \cite{Pin10} that $\alpha$ is a
Stieltjes moment sequence if and only if $H(\alpha)$ is TP.

Let $\mathbb{R}$ denote the real numbers and ${\bf x} = x_1,\ldots, x_n$.
In this paper, we may define
when a sequence of polynomials $(a_n({\bf x}))_{n \geq 0}$
in the polynomial ring $\mathbb{R}[{\bf x}]$ is a
Stieltjes moment sequence of polynomials. For any polynomial
$f({\bf x}) = \sum c_{i_1,\ldots,i_n} x_1^{i_1}x_2^{i_2}
\cdots x_n^{i_n}$ in $\mathbb{R}[{\bf x}]$, we let
$f({\bf x})|_{x_1^{i_1}x_2^{i_2}
\cdots x_n^{i_n}} = c_{i_1,\ldots,i_n}$ denote the coefficient of $x_1^{i_1}x_2^{i_2}
\cdots x_n^{i_n}$ in $f({\bf x})$.
We say that $f({\bf x})$ is ${\bf x}$-nonnegative, written
$f({\bf x}) \geq_{{\bf x}} 0$,
if
$$f({\bf x})|_{x_1^{i_1}x_2^{i_2}
\cdots x_n^{i_n}} \geq 0 \ \mbox{for all $i_1,\ldots,i_n$}.$$
Given a pair of polynomials
in $f({\bf x})$ and $g({\bf x})$, we shall write
$$f({\bf x}) \geq _{{\bf x}} g({\bf x})$$
if $f({\bf x}) -g({\bf x}) \geq_{{\bf x}} 0$.
Let $M = [m_{n,k}({\bf x})]_{n,k \geq 0}$
be a finite or infinite matrix of polynomials in
$\mathbb{R}[{\bf x}]$. We say that
$M$ is {\em ${\bf x}$-totally positive of order $r$} (${\bf x}$-$TP_r$) if all its minors
of order $1,2, \ldots, r$ are polynomials in ${\bf x}$ with nonnegative coefficients and we say that
$M$ is {\em ${\bf x}$-totally positive} (${\bf x}$-$TP$) if it is
${\bf x}$-totally positive of order
$r$ for all $r \geq 1$.

Given a sequence $ \alpha = (a_k({\bf x}))_{k \geq 0}$
of polynomials in $\mathbb{R}[{\bf x}]$,
we define the {\em Hankel matrix of $\alpha$}, $H(\alpha,{\bf x})$, by the following
$$H(\alpha,{\bf x}) = [a_{i+j}({\bf x})]_{i,j \geq 0}
=\begin{bmatrix} a_0({\bf x}) & a_1({\bf x}) & a_2({\bf x}) & a_3({\bf x}) & \cdots \\
a_1({\bf x}) & a_2({\bf x}) & a_3({\bf x}) & a_4({\bf x}) & \cdots \\
a_2({\bf x}) & a_3({\bf x}) & a_4({\bf x}) & a_5({\bf x}) & \cdots \\
a_3({\bf x}) & a_4({\bf x}) & a_5({\bf x}) & a_6({\bf x}) & \cdots\\
\vdots&\vdots&\vdots&\vdots&\ddots
\end{bmatrix}.
$$
Then if $\alpha$ is a
{\em Stieltjes moment sequence of polynomials} if and only if $H(\alpha,{\bf x})$
is ${\bf x}$-$TP$.  In the case where $n=1$ so that
we are considering polynomials
in a single variable,
our definition coincides with the definition of Stieltjes moment
sequences of polynomials as defined by Wang and Zhu \cite{WZ16}.

The main goal of this paper is to produce a number
of combinatorially defined Stieltjes moment sequences of polynomials.  We
shall do this by finding appropriate multivariable
analogues of Catalan-like numbers
as defined by Aigner \cite{Aig99}. Aigner's idea is the following.
Let $\sg= (s_k)_{k \geq 0}$ and $\tau = (t_{k+1})_{k \geq 0}$ be two
sequences of nonnegative numbers.  Then define an
infinite lower triangular matrix $A:=A^{\sg,\tau} =[a_{n,k}]_{n,k \geq 0}$
where the $a_{n,k}$s are defined by the recursions
\begin{equation}\label{arec}
a_{n+1,k} = a_{n,k-1}+s_ka_{n,k} +t_{k+1}a_{n,k+1}
\end{equation}
subject to the initial conditions that $a_{0,0} = 1$ and
$a_{n,k} =0$ unless $n \geq k \geq 0$. Aigner called
$A^{\sg,\tau}$ the {\em recursive matrix} corresponding
to $(\sg,\tau)$ and he called the
sequence $(a_{n,0})_{n \geq 0}$, the {\em Catalan-like numbers} corresponding
to $(\sg,\tau)$. Recently, Liang et al. \cite{LMW-DM} showed that
many Catalan-like numbers are Stieltjes moment sequences
by proving that the Hankel matrix of the sequence $(a_{n,0})_{n \geq 0}$
is totally positive.
Such examples include the Catalan numbers, the Bell numbers, the
central Delannoy numbers, the restricted hexagonal numbers, the central
binomial coefficients, and the large Schr\"oder numbers.

Liu and Wang \cite{LW2007} defined
 a sequence of polynomials $(f_n(q))_{n \geq 0}$ over $\mathbb{R}$ to be
{\em $q$-log convex} ($q$-$LCX$) if for all $n \geq 1$,
\begin{equation}
(f_n(q))^2 \geq _q f_{n-1}(q)f_{n+1}(q)
\end{equation}
and defined a sequence of polynomials
$(f_n(q))_{n \geq 0}$ to be
{\em strongly $q$-log convex} ($q$-$SLCX$) if for all $n \geq m \geq 1$,
\begin{equation}
f_n(q)f_m(q) \geq _q f_{n-1}(q)f_{m+1}(q).
\end{equation}
Zhu \cite{Zhu13} produced many examples of $q$-$SLCX$ sequences
of polynomials by modifying Aigner's Catalan-like numbers.
In such a situation, Zhu \cite{Zhu13} showed that the
sequence of polynomials $(m_{n,0}(q))_{n \geq 0}$ is
a $q$-$SLCX$ sequence of polynomials if for all
$k \geq 0$, $s_k(q)s_{k+1}(q) -t_{k+1}(q)r_{k+1}(q) \geq_q 0$.
However, it is not the case that
such a sequence of polynomials $(m_{n,0}(q))_{n \geq 0}$ is always
a Stieltjes moment sequence of polynomials. For example,
suppose that $a$ and $b$ are nonnegative real numbers and
$r_k(q) =1$ for $k \geq 1$, $s_0(q) =q^2$ and
$s_k(q) =1+q^2 +a*q^b$ for
$k \geq 1$, and $t_1(q) =q^4$ and $t_k(q) =q^2+q^4$ for $k \geq 2$.
It is easy to check that for all $k \geq 0$, $s_k(q)s_{k+1}(q) -t_{k+1}(q)r_{k+1}(q) \geq_q 0$. First one can compute that
\begin{eqnarray*}
m_{0,0}(q) &=& 1, \\
m_{1,0}(q) &=& q^2, \\
m_{2,0}(q) &=& q^4+4q^6+aq^{4+b}, \\
m_{3,0}(q) &=& q^4+5q^6+9q^8+2aq^{4+b}+4aq^{6+b}+a^2q^{4+2b}, \ \mbox{and}\\
m_{4,0}(q) &=& q^4+8q^6+20q^8+21q^{10}+3aq^{4+b} +13aq^{6+b} + \\
&& 15aq^{8+b}+3a^2q^{4+2b}+5a^2q^{6+2b}+a^3q^{4+3b}.
\end{eqnarray*}
Then one can compute that
\begin{multline*}
\mathrm{det}\left(\begin{bmatrix} m_{0,0}(q) & m_{1,0}(q) & m_{2,0}(q) \\
m_{1,0}(q) & m_{2,0}(q) & m_{3,0}(q) \\
m_{2,0}(q) & m_{3,0}(q) & m_{4,0}(q)
\end{bmatrix}\right) = \\
-q^8-4 q^{10}+6 q^{12}+36 q^{14}+27 q^{16}-64 q^{18}-3 a q^{8+b}-2 a q^{10+b}+27 a q^{12+b}+35 a q^{14+b}-\\
48 a q^{16+b}-3 a^2 q^{8+2 b}+5
a^2 q^{10+2 b}+14 a^2 q^{12+2 b}-12 a^2 q^{14+2 b}-a^3 q^{8+3 b}+3 a^3 q^{10+3 b}-a^3 q^{12+3 b}
\end{multline*}
which is not a polynomial in $q$ with nonnegative coefficients for
all integers $a,b \geq 0$.

Wang and Zhu \cite{WZ16} showed that many of the special sequences considered
by Zhu \cite{Zhu13} are in fact Stieltjes moment sequences of polynomials over $q$.
These include the following well-known polynomials which are $q$-analogues
of Catalan-like numbers.
\begin{enumerate}
\item The Bell polynomials $B_n(q) = \sum_{k=0}^n S(n,k)q^k$ when
$r_k(q) =1$, $s_k(q) =k+q$, and $t_k(q) =kq$. Here $S(n,k)$ is the Stirling
number of the second kind which counts the number of set partitions
of $\{1, \ldots, n\}$ into $k$ parts.
\item The Eulerian polynomials $A_n(q) = \sum_{k=0}^{n} A(n,k)q^k$ when
$r_k(q) =1$, $s_k(q) =(k+1)q+k$, and $t_k(q) =k^2q$.
Here $A(n,k)$ is the number of
permutations of $n$ with $k$ descents.
\item The $q$-Schr\"oder numbers,  $r_n(q) = \sum_{k=0}^n \frac{1}{k+1}
\binom{2k}{k} \binom{n+k}{n-k}q^k$ when $r_k(q) =1$, $s_0(q) =1+q$,
$s_k(q) =1+2q$ for $k \geq 1$, and $t_k(q) =q(1+q)$.
\item The $q$-central Delannoy numbers
$D_n(q) = \sum_{k=0}^n \binom{n+k}{n-k} \binom{2k}{k} q^k$ when
$r_k(q) =1$, $s_k(q) =1+2q$, $t_1(q) =2q(q+1)$,
 and $t_k(q) =q(1+q)$ for $k >1$.
\item The Narayana polynomials
$N_n(q) = \sum_{k=1}^n \frac{1}{n} \binom{n}{k} \binom{n}{k-1} q^k$ when
$r_k(q) =1$, $s_0(q) =q$, $s_k(q) =1+q$ for $n \geq 1$,  and $t_k(q) =q$.

\item The Narayana polynomials $W_n(q) = \sum_{k=0}^n \binom{n}{k}^2 q^k$ of type $B$
when $r_k(q) =1$, $s_k(q) =1+q$, $t_1(q) = 2q$,  and $t_k(q) =q$ for $k > 1$.
\end{enumerate}

In this paper, we consider multivariable analogues
Aigner's Catalan-like numbers. That is,
suppose that we are given three sequences of polynomials over
$\mathbb{R}$ with nonnegative coefficients
$$\pi = (r_k({\bf x}))_{k \geq 1},\  \sg = (s_k({\bf x}))_{k \geq 0}, \ \mbox{and} \
\tau = (t_{k+1}({\bf x}))_{k \geq 0}.$$
Then we define a lower triangular matrix of polynomials
$$M({\bf x}):=M^{\pi,\sg,\tau}({\bf x}) = [m_{n,k}({\bf x})]_{0 \leq k \leq n}$$
where the $m_{n,k}({\bf x})$ are defined by the recursions
\begin{equation}\label{rec:main}
m_{n+1,k}({\bf x}) =  r_k({\bf x}) m_{n,k-1}({\bf x}) + s_k({\bf x})m_{n,k}({\bf x}) + t_{k+1}({\bf x})m_{n,k+1}({\bf x})
\end{equation}
subject to the initial conditions that $m_{0,0}({\bf x}) =1$ and
$m_{n,k}({\bf x}) = 0$ unless $0 \leq k \leq n$.

We note that one can give simple combinatorial interpretations
of the polynomial $m_{n,k}({\bf x})$ defined by the
recursions (\ref{rec:main})
in terms of Motzkin paths.
A Motzkin path
is path that starts at $(0,0)$ and consist of three types of
steps, up-steps $(1,1)$, down-steps $(1,-1)$, and level-steps
$(1,0)$.  We let $\mathcal{M}_{n,k}$ denote the set all
paths that start at $(0,0)$, end at $(n,k)$, and  stays on or above the
$x$-axis. We weight an up-step at that ends at level $k$ with
$r_k({\bf x})$, a level-step that ends at level $k$ with $s_k({\bf x})$, and
a down-step that ends at level $k$ with $t_{k+1}({\bf x})$. See
Figure \ref{fig:steps}. Given a path $P$
in $\mathcal{M}_{n,k}$, we let the weight of $P$, $w(P)$, equal
the product of all the weights of the steps in $P$. Then if
we let
$$m_{n,k}({\bf x}) = \sum_{P \in \mathcal{M}_{n,k}}w(P),$$
it is easy to see that the $m_{n,k}({\bf x})$ satisfy the recursions
(\ref{rec:main}).

\fig{steps}{The weight of steps in Motzkin paths}

Suppose that we are given three sequence of polynomials over
$\mathbb{R}$ with nonnegative coefficients
$\pi = (r_k({\bf x}))_{k \geq 1}$, $\sg = (s_k({\bf x}))_{k \geq 0}$, and
$\tau = (t_{k+1}({\bf x}))_{k \geq 0}$.
The main goal of this paper is to give some necessary conditions
that will ensure that the sequence of polynomials
$(m_{s,0}({\bf x}))_{s \geq 0}$ is a Stieltjes sequence of polynomials
where the polynomials $m_{n,k}({\bf x})$ are defined by (\ref{rec:main}). First we will
prove that $(m_{s,0}({\bf x}))_{s \geq 0}$  a Stieltjes sequence of polynomials
if the matrix
$$
J({\bf x}):=J^{(\pi,\sigma,\tau)}({\bf x}) =
\begin{bmatrix} s_0({\bf x}) & r_1({\bf x}) &  &  &  \\
t_1({\bf x}) & s_1({\bf x}) & r_2({\bf x}) &  &  \\
& t_2({\bf x}) & s_2({\bf x}) & r_3({\bf x}) &  \\
 &&\ddots &\ddots & \ddots   \\
\end{bmatrix}
$$
is ${\bf x}$-TP and $r_k({\bf x}) =1$ for all $k \geq 0$.
Following
Wang and Zhu \cite{WZ16}, we shall show that if there are
sequences of polynomials with nonnegative coefficients
in $\mathbb{R}[{\bf x}]$,
\begin{equation*}
(b_1({\bf x}),b_2({\bf x}), \ldots) \ \mbox{and} \
(c_1({\bf x}),c_2({\bf x}), \ldots)
\end{equation*}
 such that
\begin{eqnarray*}
s_{n}({\bf x})&=& b_{n+1}({\bf x})+c_{n+1}({\bf x}) \
\mbox{for $n \geq 0$}, \\
t_n({\bf x}) &=& c_n({\bf x})b_{n+1}(\bf x) \ \mbox{for $n \geq 0$, and} \\
r_n({\bf x}) &=&1 \ \mbox{for $n \geq 0$,}
\end{eqnarray*}
then $J^{(\pi,\sg,\tau)}({\bf x})$ is ${\bf x}$-TP.\\
We shall also show that
$J^{(\pi,\sg,\tau)}({\bf x})$ is
${\bf x}$-TP if the following conditions hold:
\begin{enumerate}
\item [\rm (i)]$s_0({\bf x})-1\geq_{\bf x} 0$,
\item [\rm (ii)] $s_{i}({\bf x})s_{i+1}({\bf x}) - t_{i+1}({\bf x})r_{i+1}({\bf x})\geq_{\bf x} 0$ for all $i \geq 0$,
\item [\rm (iii)] $s_{i+1}({\bf x}) - t_{i+1}({\bf x})r_{i+1}({\bf x}) -1 \geq_{\bf x} 0$ for all $i \geq 0$.
\end{enumerate}
We will then use these facts to produce many examples of
Stieltjes sequences of polynomials.

We should note that one of the advantages of producing
Stieltjes moment sequences of polynomials $(a_k({\bf x}))_{k \geq 0}$
is that we automatically obtain infinitely many other examples of the
Stieltjes moment sequences of polynomials by replacing
$x_i$ by $\phi_i(\bf x)$ where $\phi_i({\bf x})$ is a polynomial with nonnegative coefficients.  That is,
$$(a_k(\phi_1({\bf x}),
\ldots, \phi_n({\bf x})))_{k \geq 0}$$
 will be another Stieltjes moment
sequence of polynomials.  In addition, we can produce
infinitely many example of Stieltjes moment sequences by
replacing $x_i$ by any nonnegative real  number $r_i$.  That is, if
$r_i \geq 0$ for $i=1, \ldots, n$, then $(a_k(r_1,\ldots,r_n))_{k \geq 0}$ is a Stieltjes moment sequence.

The outline of this paper is as follows. In Section 2,
we shall give several sufficient conditions which ensure
that the $H(\alpha,{\bf x})$ is ${\bf x}$-totally
positive for a sequence of polynomials
$\alpha =(a_k({\bf x}))_{k \geq 0}.$ Then in Section 3,
we shall uses these results to produce many combinatorial sequences
defined Stieltjes moment sequences of polynomials $(a_k({\bf x}))_{k \geq 0}$.

\section{Premliminaries}
We start with two lemmas about $n \times n$ tridiagonal matrices of
polynomials in ${\bf x}$. Suppose that $J:=J({\bf x}) =[a_{i,j}({\bf x})]_{i,j =1, \ldots ,n}$ is
tridiagonal matrix of nonnegative polynomials in ${\bf x}$ over $\mathbb{R}$.
That is, $a_{i,j}({\bf x}) \geq_{\bf x} 0$ for all $i,j$ and $a_{i,j}({\bf x}) = 0$ if
$|i-j| > 1$. Let $J[\{i_1, \ldots, i_k\},\{j_1, \ldots, j_k\}]$
denote the $k \times k$ matrix which arises  from $J$ by taking
the elements that lie in the intersection of the rows
$i_1, \ldots, i_k$ in $J$ and the columns $j_1, \ldots, j_k$ in $J$ where
$1 \leq i_1 < \cdots < i_k \leq n$ and $1 \leq j_1 < \cdots < j_k \leq n$.
We say that a minor
$\mathrm{det}J[\{i_1, \ldots, i_k\},\{j_1, \ldots, j_k\}]$ is a consecutive
principal minor if there exists an $s$ such that
$(i_1,i_2, \ldots, i_k) = (j_1,j_2, \ldots,j_k) = (s,s+1,\ldots,s+k-1)$.

\begin{lemma}\label{basic}
Suppose that $J=[a_{i,j}({\bf x})]_{i,j =1, \ldots ,n}$ is
tridiagonal matrix of nonnegative polynomials in $\mathbb{R}[{\bf x}]$.
Then $J$ is ${\bf x}$-TP if and only if all of its consecutive
principle minors are polynomials in ${\bf x}$ with nonnegative coefficients.
\end{lemma}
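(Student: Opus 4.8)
The plan is to prove the non-trivial direction: assuming every consecutive principal minor of $J$ is ${\bf x}$-nonnegative, I want to show every minor $\det J[\{i_1,\dots,i_k\},\{j_1,\dots,j_k\}]$ is ${\bf x}$-nonnegative. The key structural fact about a tridiagonal matrix is that a minor vanishes identically unless its row set and column set are ``interleaved'' in a strong sense: because $a_{i,j}=0$ whenever $|i-j|>1$, expanding the determinant shows $\det J[I,I']$ can be nonzero only if the sets $I=\{i_1<\dots<i_k\}$ and $I'=\{j_1<\dots<j_k\}$ satisfy $|i_\ell - j_\ell|\le \text{(small bound)}$; in fact one checks that a nonzero contribution forces, for each $\ell$, either $i_\ell=j_\ell$, or the indices shift by at most one in a way that the matching permutation in the determinant expansion must be the identity (since $J$ has no nonzero entries far from the diagonal, and any permutation $w$ contributing to $\det J[I,I']$ must send $\ell$ to a position with $|i_\ell - j_{w(\ell)}|\le 1$). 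First I would make this precise and conclude that the only surviving term corresponds to the identity permutation, so $\det J[I,I']=\prod_\ell a_{i_\ell,j_\ell}({\bf x})$ plus possibly corrections — but more cleanly, I would argue that $\det J[I,I']$ is nonzero only when $I$ and $I'$ can be simultaneously partitioned into maximal runs of consecutive integers aligned with each other.

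Next, the combinatorial heart: using the tridiagonal structure, I would show that any minor of $J$ factors as a product of consecutive principal minors of $J$ times a product of off-diagonal entries $a_{i,i+1}$ or $a_{i+1,i}$. Concretely, write $I\cup I'$ as a disjoint union of maximal blocks of consecutive integers; within each block, the portion of the minor that uses rows and columns of that block is either a consecutive principal minor of $J$ (when the block is a common run in $I$ and $I'$), or reduces after one Laplace expansion along a corner to such a minor times a single super/sub-diagonal entry. Iterating this block decomposition expresses $\det J[I,I']$ as a product of quantities each of which is either a consecutive principal minor (${\bf x}$-nonnegative by hypothesis) or an entry $a_{i,j}({\bf x})\ge_{\bf x}0$ (nonnegative by the standing assumption that $J$ has nonnegative polynomial entries). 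A product of ${\bf x}$-nonnegative polynomials is ${\bf x}$-nonnegative, which finishes this direction. The converse direction is immediate, since a consecutive principal minor is in particular a minor, so if $J$ is ${\bf x}$-TP then all consecutive principal minors are ${\bf x}$-nonnegative.

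The main obstacle is bookkeeping the block decomposition cleanly enough to be convincing without a messy induction: one must argue that when $I$ and $I'$ ``desynchronize'' inside a block, the determinant still splits multiplicatively, and that no cross-block entries of $J$ can contribute (which again follows from $a_{i,j}=0$ for $|i-j|>1$, since distinct maximal blocks of $I\cup I'$ are separated by a gap of at least $2$). I would handle this by induction on $k$: locate the smallest index $i_1$ in $I$; if $i_1\notin I'$ then $i_1 = j_1 - 1$ is forced (else the first row of $J[I,I']$ is zero since $a_{i_1,j}=0$ for all $j\ge j_1 > i_1+1$... unless $j_1 = i_1$ or $j_1=i_1+1$), and expanding the determinant along its first row leaves a single term $a_{i_1,i_1+1}({\bf x})$ times a smaller minor of the same tridiagonal form, to which induction applies; if $i_1 = j_1$, peel off the maximal common consecutive run starting at $i_1$ as a consecutive principal minor block and apply induction to the rest. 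Everything else is routine, and the nonnegativity closure properties of $\ge_{\bf x}$ (sums and products of ${\bf x}$-nonnegative polynomials are ${\bf x}$-nonnegative) do the final work.
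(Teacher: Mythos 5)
Your proposal is correct and follows essentially the same route as the paper: first show that a minor of a tridiagonal matrix vanishes unless each row index lies within distance one of the corresponding column index, and then factor any surviving minor into a product of consecutive principal minors and single (nonnegative) off-diagonal entries, which the paper does directly and you do by peeling off the smallest row index inductively. The only quibble is that in your induction step the case $i_1\notin I'$ can also force $j_1=i_1-1$ rather than $j_1=i_1+1$; the symmetric argument (which the paper likewise invokes) covers this, so it does not affect the validity of the proof.
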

\begin{proof}

Consider a minor
$\mathrm{det}J[\{i_1, \ldots, i_k\},\{j_1, \ldots, j_k\}]$.
First we observe that \\
$\mathrm{det}J[\{i_1, \ldots, i_k\},\{j_1, \ldots, j_k\}] =0$ if
there is an $s$ such that $|i_s -j_s|> 1$.  That is, suppose
that  $i_s < j_s$. Clearly
$a_{i_s,j_t}({\bf x}) =0$ for $t = s, s+1, \ldots, k$. But then
 $$\mathrm{det}J[\{i_1, \ldots, i_k\},\{j_1, \ldots, j_k\}] =
\sum_{\sg \in S_k} \mathrm{sgn}(\sg) a_{i_1,j_{\sg_1}}({\bf x}) \cdots
a_{i_k,j_{\sg_k}}({\bf x})$$
where $S_k$ is the symmetric group.
It follows that if $a_{i_s,j_{\sg_s}}({\bf x}) \neq 0$, then $\sg_s \leq s-1$. But in such a situation,
there must be an $r <s$ such that $\sg_r \geq s$ which would imply
that $a_{i_r,j_{\sg_r}}({\bf x}) =0$. Thus
$\mathrm{det}J[\{i_1, \ldots, i_k\},\{j_1, \ldots, j_k\}] =0$.
A similar argument can be used to show that if
$j_s < i_s$, then
 $\mathrm{det}J[\{i_1, \ldots, i_k\},\{j_1, \ldots, j_k\}] =0$.

Thus we only have to consider minors of the form
$\mathrm{det}J[\{i_1, \ldots, i_k\},\{j_1, \ldots, j_k\}]$ where
$|i_s-j_s|\leq 1$. In such a situation, let $1\leq t_1 < \cdots < t_r \leq k$
be the indices $t$ such that such that $|i_t -j_t| =1$. Then
it is easy to see that
$\mathrm{det}J[\{i_1, \ldots, i_k\},\{j_1, \ldots, j_k\}]$ equals
$\prod_{s = 1}^r a_{i_{t_s},j_{t_s}}({\bf x})$ times a product of
consecutive principle minors involving consecutive indices. For example,
\begin{multline*}\mathrm{det}J[\{1,2,3,5,6,9,11,12\},\{2,3,4,5,6,8,11,12\}] = \\
a_{1,2}({\bf x})a_{2,3}({\bf x})a_{3,4}({\bf x})\mathrm{det}J[\{5,6\},\{5,6\}]a_{9,8}({\bf x})
\mathrm{det}J[\{11,12\},\{11,12\}].
\end{multline*}
Thus it follows that if all the consecutive principal minors
are polynomials in ${\bf x}$ with nonnegative coefficients, then
$J$ is  ${\bf x}$-TP.
\end{proof}

\begin{lemma}\label{key}
Suppose that
$$
J({\bf x}) =
\begin{bmatrix} s_0({\bf x}) & r_1({\bf x}) &  &  &  \\
t_1({\bf x}) & s_1({\bf x}) & r_2({\bf x}) &  &  \\
& t_2({\bf x}) & s_2({\bf x}) & r_3({\bf x}) &  \\
 &&\ddots &\ddots & \ddots   \\
\end{bmatrix}
$$
is
tridiagonal matrix of nonnegative polynomials in $\mathbb{R}[{\bf x}]$,
where $\sg=(s_i({\bf x}))_{i \geq 1}$, $\pi =(r_i({\bf x}))_{i \geq 0}$, and $
\tau =(t_{i+1}({\bf x}))_{i \geq 0}$ are sequences of
non-zero polynomials over $\mathbb{R}$ with nonnegative coefficients such
that
\begin{enumerate}
\item [\rm (i)]$s_0({\bf x})-1\geq_{\bf x} 0$,
\item [\rm (ii)] $s_{i}({\bf x})s_{i+1}({\bf x}) - t_{i+1}({\bf x})r_{i+1}({\bf x})\geq_{\bf x} 0$ for all $i \geq 0$,
\item [\rm (iii)] $s_{i+1}({\bf x}) - t_{i+1}({\bf x})r_{i+1}({\bf x}) -1 \geq_{\bf x} 0$ for all $i \geq 0$.
\end{enumerate}
Then $J({\bf x})$ is ${\bf x}$-TP.
\end{lemma}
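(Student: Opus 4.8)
By Lemma~\ref{basic}, it suffices to show that every consecutive principal minor of $J({\bf x})$ is ${\bf x}$-nonnegative. Fix $s \geq 0$ and $k \geq 1$, and let $D_{s,k}$ denote $\det J[\{s,s+1,\dots,s+k-1\},\{s,s+1,\dots,s+k-1\}]$ (using $0$-indexing to match the displayed matrix, so the first diagonal entry is $s_0({\bf x})$). Expanding the determinant of a tridiagonal matrix along its first row gives the standard continuant recursion
\begin{equation*}
D_{s,k} = s_{s}({\bf x})\, D_{s+1,k-1} - t_{s+1}({\bf x})\, r_{s+1}({\bf x})\, D_{s+2,k-2},
\end{equation*}
with the conventions $D_{s,0} = 1$ and $D_{s,-1}=0$ (equivalently, $D_{s,1} = s_s({\bf x})$). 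The plan is to prove by induction on $k$ the stronger pair of statements: (a) $D_{s,k} \geq_{\bf x} 0$ for all $s \geq 1$, and (b) $D_{s,k} - D_{s+1,k-1} \geq_{\bf x} 0$ for all $s \geq 1$; then to handle the minors that actually include row/column $0$ separately, using hypothesis~(i). Carrying (a) and (b) together is the key trick, exactly as in Wang and Zhu~\cite{WZ16}: statement (b) is what feeds the induction for (a).

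For the inductive step, assume (a) and (b) hold for all smaller values of $k$ (and all relevant starting indices $s \geq 1$). For (a) with $s \geq 1$, write
\begin{equation*}
D_{s,k} = s_s({\bf x})\, D_{s+1,k-1} - t_{s+1}({\bf x}) r_{s+1}({\bf x})\, D_{s+2,k-2}
= \bigl(s_s({\bf x}) s_{s+1}({\bf x}) - t_{s+1}({\bf x}) r_{s+1}({\bf x})\bigr) D_{s+2,k-2} + s_s({\bf x})\bigl(D_{s+1,k-1} - s_{s+1}({\bf x}) D_{s+2,k-2}\bigr),
\end{equation*}
but $D_{s+1,k-1} - s_{s+1}({\bf x}) D_{s+2,k-2} = -t_{s+2}({\bf x}) r_{s+2}({\bf x}) D_{s+3,k-3}$, which has the wrong sign, so instead I would argue more directly: using (b) for index $s+1$, namely $D_{s+1,k-1} \geq_{\bf x} D_{s+2,k-2}$, together with hypothesis~(ii) in the form $s_s({\bf x}) s_{s+1}({\bf x}) \geq_{\bf x} t_{s+1}({\bf x}) r_{s+1}({\bf x})$, one gets
\begin{equation*}
D_{s,k} = s_s({\bf x}) D_{s+1,k-1} - t_{s+1}({\bf x}) r_{s+1}({\bf x}) D_{s+2,k-2} \geq_{\bf x} s_s({\bf x}) D_{s+1,k-1} - s_s({\bf x}) s_{s+1}({\bf x}) D_{s+2,k-2} = s_s({\bf x})\bigl(D_{s+1,k-1} - s_{s+1}({\bf x}) D_{s+2,k-2}\bigr),
\end{equation*}
and again the last factor is $-t_{s+2} r_{s+2} D_{s+3,k-3} \le_{\bf x} 0$. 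The fix is to couple with (b) correctly: apply hypothesis~(iii), $s_{s+1}({\bf x}) \geq_{\bf x} t_{s+1}({\bf x}) r_{s+1}({\bf x}) + 1$, which gives $t_{s+1}({\bf x}) r_{s+1}({\bf x}) D_{s+2,k-2} \le_{\bf x} (s_{s+1}({\bf x}) - 1) D_{s+2,k-2} \le_{\bf x} s_{s+1}({\bf x}) D_{s+2,k-2} \le_{\bf x} D_{s+1,k-1}$ by (b); hence $D_{s,k} \geq_{\bf x} s_s({\bf x}) D_{s+1,k-1} - D_{s+1,k-1} = (s_s({\bf x}) - 1) D_{s+1,k-1} \geq_{\bf x} 0$ once we also know $s_s({\bf x}) \geq_{\bf x} 1$, which for $s \geq 1$ follows from (iii) (with $i = s-1$) since $t_s r_s \geq_{\bf x} 0$. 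The same chain of inequalities simultaneously yields (b): $D_{s,k} - D_{s+1,k-1} \geq_{\bf x} (s_s({\bf x}) - 1) D_{s+1,k-1} - t_{s+1}({\bf x}) r_{s+1}({\bf x}) D_{s+2,k-2} + \bigl(t_{s+1} r_{s+1} D_{s+2,k-2}\bigr)$— more carefully, $D_{s,k} - D_{s+1,k-1} = (s_s({\bf x}) - 1) D_{s+1,k-1} - t_{s+1}({\bf x}) r_{s+1}({\bf x}) D_{s+2,k-2} \geq_{\bf x} 0$ because $(s_s - 1) D_{s+1,k-1} \geq_{\bf x} (s_s - 1) D_{s+2,k-2} \geq_{\bf x} t_{s+1} r_{s+1} D_{s+2,k-2}$ using (b) at index $s+1$, (iii), and $s_s \geq_{\bf x} 1$.

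Finally, for the consecutive principal minors starting at index $0$, i.e. $D_{0,k}$, the same row expansion gives $D_{0,k} = s_0({\bf x}) D_{1,k-1} - t_1({\bf x}) r_1({\bf x}) D_{2,k-2}$, and the identical argument goes through using (i) in place of the bound $s_0 \geq_{\bf x} 1$: indeed (i) says exactly $s_0({\bf x}) \geq_{\bf x} 1$, so nothing new is needed. The base cases $k = 1$ ($D_{s,1} = s_s({\bf x}) \geq_{\bf x} 0$, and $D_{s,1} - D_{s+1,0} = s_s({\bf x}) - 1 \geq_{\bf x} 0$ for $s \geq 1$ by (iii), for $s=0$ by (i)) and $k = 2$ ($D_{s,2} = s_s s_{s+1} - t_{s+1} r_{s+1} \geq_{\bf x} 0$ by (ii)) are immediate. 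The main obstacle is getting the bookkeeping of the coupled induction right — choosing the right auxiliary inequality (b) so that hypotheses~(ii) and~(iii) can both be invoked at the correct indices — rather than any deep computation; once the pair (a),(b) is set up correctly the estimates are elementary, and Lemma~\ref{basic} then upgrades nonnegativity of consecutive principal minors to full ${\bf x}$-total positivity.
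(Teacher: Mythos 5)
Your overall strategy --- reduce to consecutive principal minors via Lemma~\ref{basic} and then run a coupled two--term induction (nonnegativity of a minor together with nonnegativity of its difference with a smaller minor) --- is exactly the paper's. But your execution has a genuine gap: you expand along the \emph{first} row, getting $D_{s,k}=s_s D_{s+1,k-1}-t_{s+1}r_{s+1}D_{s+2,k-2}$, and couple with the statement (b) $D_{s,k}\geq_{\bf x} D_{s+1,k-1}$ obtained by deleting the first row and column. This misaligns the indices with hypothesis (iii): that hypothesis bounds $t_{i+1}r_{i+1}$ by $s_{i+1}-1$ (its lower-right diagonal neighbour), whereas your recursion needs it bounded by $s_s-1=s_i-1$ (its upper-left neighbour), and no hypothesis supplies that. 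Concretely, your chain ``$(s_s-1)D_{s+2,k-2}\geq_{\bf x} t_{s+1}r_{s+1}D_{s+2,k-2}$ by (iii)'' uses $s_s-1\geq_{\bf x} t_{s+1}r_{s+1}$, which is not (iii); and the earlier chain ends with $s_{s+1}D_{s+2,k-2}\leq_{\bf x} D_{s+1,k-1}$, which is reversed (in fact $s_{s+1}D_{s+2,k-2}-D_{s+1,k-1}=t_{s+2}r_{s+2}D_{s+3,k-3}\geq_{\bf x}0$). Worse, your auxiliary statement (b) is simply false under the hypotheses: take $s_0=1$, $s_1=1+q$, $r_1=1$, $t_1=q$, and $s_i=2$, $r_i=t_i=1$ for $i\geq 2$. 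All of (i)--(iii) hold, yet
$$D_{1,2}-D_{2,1}=\bigl(s_1s_2-t_2r_2\bigr)-s_2=(2+2q-1)-2=2q-1,$$
which is not ${\bf x}$-nonnegative. So the induction you propose cannot close, even for $s\geq 1$ and already at $k=2$ (a case you checked only for (a), not for (b)).

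The fix is to expand about the \emph{last} row instead. Writing $M_k$ for the $k\times k$ minor on consecutive indices, one gets $M_k=s_{k-1}M_{k-1}-t_{k-1}r_{k-1}M_{k-2}$ (with $M_{k-1},M_{k-2}$ obtained by deleting the last one or two rows and columns), so the diagonal entry and the off-diagonal product carry the \emph{same} index and (iii) applies directly:
$$M_k=(s_{k-1}-t_{k-1}r_{k-1})M_{k-1}+t_{k-1}r_{k-1}(M_{k-1}-M_{k-2}),$$
with both summands nonnegative once the coupled statement is taken to be $M_k-M_{k-1}\geq_{\bf x}0$ for the \emph{trailing} truncation; the same identity minus $M_{k-1}$ gives the inductive step for that difference using (iii), and the base case $M_2-M_1=s_0(s_1-t_1r_1-1)+t_1r_1(s_0-1)$ uses (i) and (iii). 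This is precisely the paper's argument; hypothesis (i) (or its analogue $s_{m}-1\geq_{\bf x}0$, which follows from (iii)) then handles minors starting at an arbitrary index. Your write-up correctly identifies the bookkeeping of the coupled induction as ``the main obstacle,'' but the particular coupling you chose is the one that does not work.
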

\begin{proof}
By Lemma \ref{basic}, we need only show that all the consecutive
principal minors of $J({\bf x})$ are nonnegative polynomials in ${\bf x}$.

Let $M_k({\bf x}) = \mathrm{det}J[\{1,\ldots,k\},\{1,\ldots, k\}]$.
First we shall prove by induction that $M_k({\bf x}) \geq_{\bf x} 0$ for all
$1 \leq k \leq n$ and $M_k({\bf x})-M_{k-1}({\bf x}) \geq_{\bf x} 0$ for
all $2 \leq k \leq n$.
Note that $M_1({\bf x}) = s_0({\bf x}) \geq_{\bf x} 0$ and
$ M_2({\bf x})= s_0({\bf x})s_1({\bf x}) -t_1({\bf x})r_1({\bf x}) \geq_{\bf x} 0$ by assumption.
Then note that
\begin{eqnarray*}
M_2({\bf x})-M_1({\bf x}) &=& s_0({\bf x})s_1({\bf x}) -t_1({\bf x})r_1({\bf x}) -s_0({\bf x})\\
&=&s_0({\bf x})(s_1({\bf x}) -t_1({\bf x})r_1({\bf x})-1)+t_1({\bf x})r_1({\bf x})(s_0({\bf x})-1) \geq_{\bf x} 0
\end{eqnarray*}
since we are assuming that that $s_0({\bf x}) -1 \geq_{\bf x} 0$ and $s_1({\bf x}) -t_1({\bf x})r_1({\bf x})-1 \geq_{\bf x} 0$.

Next assume $k \geq 3$ and by induction that
$M_{k-1}({\bf x}) \geq_{\bf x} 0$ and $M_{k-1}({\bf x}) -M_{k-2}({\bf x}) \geq_{\bf x} 0$.
Then expanding $M_k$ about the last row, we see
that
\begin{eqnarray*}
M_k({\bf x}) &=& s_k({\bf x})M_{k-1}({\bf x}) - r_k({\bf x})t_k({\bf x})M_{k-2}({\bf x}) \\
&=&
(s_k({\bf x})-r_k({\bf x})t_k({\bf x}))M_{k-1}({\bf x}) +r_k({\bf x})t_k({\bf x})(M_{k-1}({\bf x}) -M_{k-2}({\bf x}))\geq_{\bf x} 0.
\end{eqnarray*}
Similarly,
$$M_k({\bf x}) - M_{k-1}({\bf x}) =
(s_k({\bf x})-r_k({\bf x})t_k({\bf x})-1)M_{k-1}({\bf x}) +r_k({\bf x})t_k({\bf x})(M_{k-1}({\bf x}) -M_{k-2}({\bf x}))\geq_{\bf x} 0.$$
Thus it follows that $M_k({\bf x}) \geq_{\bf x} 0$ for all $k$.

For consecutive principle minors of the form
$\mathrm{det}J[\{m,m+1, \ldots, m+r\},\{m,m+1, \ldots, m+r\}]$
where $m > 1$, we note $s_m({\bf x}) -1 \geq _{\bf x} 0$ since
$s_m({\bf x}) -t_m({\bf x})r_m({\bf x}) -1 \geq _{\bf x} 0$.  It
then follows that the matrix $J[\{s,s+1, \ldots, s+r\},\{s,s+1, \ldots, s+r\}]$ satisfies the hypothesis of the theorem
so that the proof that $M_n({\bf x}) \geq_{\bf x} 0$ for all $n \geq 1$ can
be applied to show that
$$\mathrm{det}J[\{s,s+1, \ldots, s+r\},\{s,s+1, \ldots, s+r\}]
\geq_{\bf x} 0.$$
\end{proof}
Wang and Zhu \cite[Lemma 3.3]{WZ16},
showed that if $b_n(q)$ and $c_n(q)$ be all $q$-nonnegative,
then the corresponding tridiagonal matrix is $q$-TP.
The method of the proof used in \cite[Lemma 3.3]{WZ16} can be carried over verbatim to its ${\bf x}$-analogue.
Here we omit the details for brevity.
\begin{lemma}\label{product}
Let $(b_1({\bf x}),b_2({\bf x}), \ldots)$ and
$(c_1({\bf x}),c_2({\bf x}), \ldots)$
be sequences of polynomials in  $\mathrm{R}[{\bf x}]$
with nonnegative
coefficients. Then the tridiagonal matrix
$$J^{b,c} = \begin{bmatrix}
b_1({\bf x}) + c_1({\bf x}) & 1 & & \\
b_2({\bf x})c_1({\bf x})& b_2({\bf x}) + c_2({\bf x})&1 & \\
& b_3({\bf x})c_2({\bf x})& b_3({\bf x}) + c_3({\bf x})&  \ddots \\
& \ddots &\ddots
\end{bmatrix} $$
is ${\bf x}$-TP.
\end{lemma}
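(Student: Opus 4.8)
The plan is to apply Lemma~\ref{basic}, which reduces the claim to showing that every consecutive principal minor of $J^{b,c}$ is ${\bf x}$-nonnegative. The first observation is that each such minor $\mathrm{det}\,J^{b,c}[\{m,m+1,\dots,m+r\},\{m,m+1,\dots,m+r\}]$ is itself the determinant of a tridiagonal matrix with exactly the same structure as $J^{b,c}$: its diagonal is $b_m({\bf x})+c_m({\bf x}),\dots,b_{m+r}({\bf x})+c_{m+r}({\bf x})$, its superdiagonal consists of $1$'s, and its subdiagonal is $b_{m+1}({\bf x})c_m({\bf x}),\dots,b_{m+r}({\bf x})c_{m+r-1}({\bf x})$ --- in other words it is the top-left $(r+1)\times(r+1)$ corner of $J^{b',c'}$ for the shifted sequences $b'_i({\bf x})=b_{m+i-1}({\bf x})$ and $c'_i({\bf x})=c_{m+i-1}({\bf x})$. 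Hence it is enough to prove that every leading principal minor $D_k({\bf x}):=\mathrm{det}\,J^{b,c}[\{1,\dots,k\},\{1,\dots,k\}]$ is ${\bf x}$-nonnegative, and then apply this conclusion to each of the shifted families $(b',c')$.

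Next I would expand $D_k({\bf x})$ along its last row to obtain the three-term recursion
\begin{equation*}
D_k({\bf x}) = \bigl(b_k({\bf x})+c_k({\bf x})\bigr)D_{k-1}({\bf x}) - b_k({\bf x})c_{k-1}({\bf x})D_{k-2}({\bf x}),
\end{equation*}
valid for $k\geq 2$ with $D_0({\bf x})=1$ and $D_1({\bf x})=b_1({\bf x})+c_1({\bf x})$. The key step is to prove, by induction on $k$, the closed form
\begin{equation*}
D_k({\bf x}) = b_1({\bf x})b_2({\bf x})\cdots b_k({\bf x}) + c_k({\bf x})D_{k-1}({\bf x}) = \sum_{i=0}^{k}\Bigl(\prod_{j=1}^{i}b_j({\bf x})\Bigr)\Bigl(\prod_{j=i+1}^{k}c_j({\bf x})\Bigr),
\end{equation*}
with the usual convention that empty products equal $1$. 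Indeed, assuming $D_{k-1}({\bf x})=b_1({\bf x})\cdots b_{k-1}({\bf x})+c_{k-1}({\bf x})D_{k-2}({\bf x})$, one substitutes $c_{k-1}({\bf x})D_{k-2}({\bf x})=D_{k-1}({\bf x})-b_1({\bf x})\cdots b_{k-1}({\bf x})$ into the recursion; the $D_{k-2}$ terms cancel and one is left with $b_1({\bf x})\cdots b_k({\bf x})+c_k({\bf x})D_{k-1}({\bf x})$, and iterating in $k$ yields the displayed sum.

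Because each $b_j({\bf x})$ and each $c_j({\bf x})$ is a polynomial with nonnegative coefficients, each summand above is a polynomial with nonnegative coefficients, so $D_k({\bf x})\geq_{\bf x} 0$; equivalently, the recursion $D_k({\bf x})=b_1({\bf x})\cdots b_k({\bf x})+c_k({\bf x})D_{k-1}({\bf x})$ together with $D_0({\bf x})=1$ gives $D_k({\bf x})\geq_{\bf x} 0$ immediately by induction. Running the same computation for every shifted family $(b',c')$ shows that all consecutive principal minors of $J^{b,c}$ are ${\bf x}$-nonnegative, and Lemma~\ref{basic} then yields that $J^{b,c}$ is ${\bf x}$-TP. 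I do not expect a real obstacle here: the only point requiring care is the bookkeeping in the reduction step --- verifying that a window $\{m,\dots,m+r\}$ reproduces a matrix of the same form, with unaltered top-left diagonal entry $b_m({\bf x})+c_m({\bf x})$ --- after which everything is an elementary determinant identity. (One tempting alternative, factoring $J^{b,c}=LU$ into a nonnegative lower-bidiagonal times upper-bidiagonal matrix and invoking Cauchy--Binet, does not work directly because solving for the factors forces rational, not polynomial, entries; this is why the recursion route is preferable, and it presumably coincides with the argument of \cite{WZ16} cited just above.)
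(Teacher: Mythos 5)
Your proof is correct, but it takes a genuinely different route from the one the paper relies on. The paper gives no argument at all for this lemma: it cites Wang and Zhu \cite[Lemma 3.3]{WZ16} and says the proof carries over verbatim; that proof proceeds by factoring the tridiagonal matrix into a product of two bidiagonal ${\bf x}$-nonnegative matrices and invoking the Cauchy--Binet theorem (the same device used in the proof of Theorem \ref{HMJ}). Your route instead goes through Lemma \ref{basic}: you reduce to consecutive principal minors, note each window $\{m,\dots,m+r\}$ reproduces a matrix of the same shape for shifted sequences, and establish the identity $D_k({\bf x})=b_1({\bf x})\cdots b_k({\bf x})+c_k({\bf x})D_{k-1}({\bf x})$, whence $D_k({\bf x})=\sum_{i=0}^{k}\bigl(\prod_{j=1}^{i}b_j({\bf x})\bigr)\bigl(\prod_{j=i+1}^{k}c_j({\bf x})\bigr)\geq_{\bf x}0$; all of these steps check out, and this yields a self-contained, elementary proof together with an explicit manifestly nonnegative formula for the minors. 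One correction to your closing parenthetical: the factorization approach does work, and with polynomial entries --- you only tried the lower-times-upper order, where the pivots become ratios of principal minors. In the other order one has $J^{b,c}=UL$ with $U$ upper bidiagonal having diagonal $b_1({\bf x}),b_2({\bf x}),\dots$ and superdiagonal $1$'s, and $L$ lower bidiagonal having diagonal $1$'s and subdiagonal $c_1({\bf x}),c_2({\bf x}),\dots$; indeed $(UL)_{i,i}=b_i({\bf x})+c_i({\bf x})$, $(UL)_{i,i-1}=b_i({\bf x})c_{i-1}({\bf x})$, and $(UL)_{i,i+1}=1$. Since bidiagonal matrices with ${\bf x}$-nonnegative entries are ${\bf x}$-TP, Cauchy--Binet finishes the argument; this is what the cited proof does, and it explains why the hypotheses on $J^{b,c}$ take exactly this product form.
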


\begin{theorem}\label{HMJ}
 Let
$J =J^{(\pi,\sg,\tau)}({\bf x})$ be the tridiagonal matrix
$$J  =
\begin{bmatrix} s_0({\bf x}) & r_1({\bf x}) &  &  &   \\
t_1({\bf x}) & s_1({\bf x}) & r_2({\bf x}) &  &  \\
& t_2({\bf x}) & s_2({\bf x})  & r_3({\bf x}) &  \\
 &\ddots &\ddots & \ddots &  \\
 && t_{n-1}({\bf x}) & s_{n-1}({\bf x}) &r_{n}({\bf x}) \\
&&\ddots&\ddots&\ddots \end{bmatrix}$$
where $\sg=(s_i({\bf x}))_{i \geq 1}$, and $
\tau =(t_{i+1}({\bf x}))_{i \geq 0}$ are sequences of
non-zero polynomials over $\mathbb{R}$ with non-negative coefficients
and $r_k({\bf x}) =1$ for all $k \geq 0$.
Let $M({\bf x})$ be the a lower triangular matrix of polynomials
$$M({\bf x}):=M^{\pi,\sg,\tau}({\bf x}) = [m_{n,k}({\bf x})]_{0 \leq k \leq n}$$
where the $m_{n,k}({\bf x})$ are defined by (\ref{rec:main}).
Then if the coefficient matrix $J$ is ${\bf x}$-totally positive,
the sequence $(m_{n,0}({\bf x}))_{n \geq 0}$ is a Stieltjes moment sequence of polynomials.
\end{theorem}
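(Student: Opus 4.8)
The plan is to reduce the theorem to a statement about total positivity of a product of matrices, following the classical Lindström–Gessel–Viennot / path-counting philosophy that already underlies the combinatorial interpretation of $m_{n,k}({\bf x})$ in terms of Motzkin paths. First I would recall the standard fact that for a sequence $\alpha=(a_k({\bf x}))_{k\geq 0}$ of polynomials, the Hankel matrix $H(\alpha,{\bf x})$ is ${\bf x}$-$TP$ provided it admits a factorization $H(\alpha,{\bf x}) = L\,D\,L^{T}$ (or, more to the point, provided $a_{n,0}$ can be written as the $(n,0)$-entry of a power $J^n$ of a tridiagonal ``Jacobi-type'' matrix with ${\bf x}$-nonnegative entries), together with the ${\bf x}$-$TP$ property of $J$ itself. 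Concretely, I would establish the identity $m_{n,0}({\bf x}) = (J^{n})_{0,0}$, which is immediate from the recursion (\ref{rec:main}) with $r_k({\bf x})=1$: the recursion says exactly that the row vector of $m_{n,k}({\bf x})$ is obtained from that of $m_{n-1,k}({\bf x})$ by multiplying by $J$ (indexing rows/columns from $0$), and the initial condition $m_{0,0}({\bf x})=1$ pins down the seed vector $e_0$.

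Next I would invoke the key structural lemma (this is the ${\bf x}$-analogue of a result of Liang–Mao–Wang / Wang–Zhu, and in the single-variable case is due to those authors): if $J$ is a tridiagonal ${\bf x}$-$TP$ matrix of polynomials, then the Hankel matrix of the sequence $\bigl((J^n)_{0,0}\bigr)_{n\geq 0}$ is ${\bf x}$-$TP$. The mechanism is that the Hankel matrix factors as $H = V\,V^{T}$ where the rows of $V$ record, via the path interpretation, the weighted walk generating functions; more precisely one uses that $J = \Lambda\,\Lambda^{T}$-type decompositions propagate total positivity, and that products and powers of ${\bf x}$-$TP$ matrices are ${\bf x}$-$TP$ because the Cauchy–Binet expansion of a minor of a product $AB$ is a sum of products of minors of $A$ and minors of $B$, each of which is ${\bf x}$-nonnegative by hypothesis — and a sum of products of ${\bf x}$-nonnegative polynomials is ${\bf x}$-nonnegative. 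Thus the only genuinely new ingredient over the real-number case is the trivial but essential observation that $\geq_{\bf x}$ is preserved under addition and multiplication, so every step of the classical total-positivity argument (Cauchy–Binet, $LDL^T$-factorization, path concatenation) goes through with ``nonnegative real number'' replaced by ``${\bf x}$-nonnegative polynomial''.

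So the skeleton is: (1) prove $m_{n,0}({\bf x})=(J^n)_{0,0}$ from the recursion; (2) set up the Hankel matrix $H=[m_{i+j,0}({\bf x})]_{i,j\geq 0}$ and exhibit its factorization through powers of $J$, i.e.\ write $H = B\,J^{?}\,B^{T}$ or directly realize each $m_{i+j,0}$ as a bilinear form $e_0^{T} J^{i+j} e_0$ and hence $H = (\mathrm{rows}\ e_0^{T}J^{i})\cdot(\mathrm{columns}\ J^{j}e_0)$, which displays $H$ as a product $UW$ of a matrix $U$ whose minors are minors of powers of $J$ and a matrix $W$ likewise; (3) apply Cauchy–Binet together with the fact (established in Lemmas \ref{basic} and \ref{key}, plus closure of ${\bf x}$-$TP$ under products) that all minors involved are ${\bf x}$-nonnegative. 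I expect the main obstacle to be step (2): getting the factorization of the Hankel matrix into a clean product of matrices each of which is manifestly ${\bf x}$-$TP$. In the real case this is handled by the theory of orthogonal polynomials / Jacobi matrices (the Hankel matrix of moments equals $L D L^{T}$ where $L$ is the change-of-basis matrix to the orthogonal polynomials and $D$ is diagonal with the $t_k$'s), and one must check that the entries of $L$ and $D$ — which are polynomials in the $r_k,s_k,t_k$ — are ${\bf x}$-nonnegative; that is where the hypothesis that $J$ is ${\bf x}$-$TP$ (rather than merely having ${\bf x}$-nonnegative entries) is used, since $TP$ of the tridiagonal matrix is equivalent to nonnegativity of the consecutive principal minors, which are exactly the quantities whose positivity makes the $LDL^T$ entries ${\bf x}$-nonnegative. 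Once that bookkeeping is in place, the conclusion that $(m_{n,0}({\bf x}))_{n\geq 0}$ is a Stieltjes moment sequence of polynomials — i.e.\ $H$ is ${\bf x}$-$TP$ — follows formally.
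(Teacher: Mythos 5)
Your overall architecture is the same as the paper's: you realize $m_{n,0}({\bf x})=(J^n)_{0,0}$ from the recursion (\ref{rec:main}), factor the Hankel matrix as $H=UW$ with $U=[(J^i)_{0,k}]=M({\bf x})$ and $W=[(J^j)_{k,0}]$ (which, since $r_k\equiv 1$, is exactly $T({\bf x})M({\bf x})^t$ with $T=\mathrm{diag}(1,t_1,t_1t_2,\ldots)$ --- so your $UW$ is literally the paper's $M T M^t$), and then invoke Cauchy--Binet. That part is fine and matches the paper.

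The genuine gap is in your justification that the factors are ${\bf x}$-TP. You assert that $U$ is ``a matrix whose minors are minors of powers of $J$''; this is false, because a minor of $U$ on rows $i_1<\cdots<i_k$ has the form $\det[(J^{i_s})_{0,j_t}]$, with a \emph{different} power of $J$ in each row, so it is not a minor of any single ${\bf x}$-TP matrix and closure of ${\bf x}$-TP under products does not apply to it directly. Later you say the hypothesis that $J$ is ${\bf x}$-TP is used ``to make the $LDL^T$ entries ${\bf x}$-nonnegative,'' but entrywise nonnegativity of $M({\bf x})$ is automatic from the Motzkin-path interpretation (it needs only ${\bf x}$-nonnegativity of the entries of $J$) and is far weaker than what Cauchy--Binet requires, namely that \emph{all} minors of $M({\bf x})$ be ${\bf x}$-nonnegative. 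So the one step that actually consumes the hypothesis ``$J$ is ${\bf x}$-TP'' is missing. The paper closes it by induction on the leading principal submatrices: the recursion (\ref{rec:main}) gives the block factorization
$$M_{n+1}({\bf x})=\left[\begin{array}{cc}e_{n+1}({\bf x}) & \\ 0_n & M_n({\bf x})\end{array}\right]
\left[\begin{array}{c}e_{n+1}({\bf x})\\ J_n({\bf x})\end{array}\right],$$
where $J_n({\bf x})$ is a submatrix of $J({\bf x})$ (hence ${\bf x}$-TP), and Cauchy--Binet applied at each inductive step yields that $M_{n+1}({\bf x})$ is ${\bf x}$-TP. Equivalently, one could prove ${\bf x}$-total positivity of $M({\bf x})$ by a Lindstr\"om--Gessel--Viennot nonintersecting-Motzkin-paths argument, which you allude to but do not carry out; either way, this step must be supplied before the Cauchy--Binet conclusion for $H({\bf x})$ is legitimate.
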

\begin{proof}
Let $M({\bf x})=[m_{i,j}({\bf x}) ]_{i,j \geq 0}$ and
$H({\bf x})=[m_{i+j,0}({\bf x})]_{i,j\ge 0}$  be the Hankel matrix of the ${\bf x}$-Catalan-like numbers $m_{n,0}({\bf x})$.
We need to show that $H({\bf x})$ is ${\bf x}$-totally positive. Let $T_0({\bf x})=1, T_k({\bf x})=t_1({\bf x})\cdots t_k({\bf x})$ and $T=\textrm{diag} (T_0({\bf x}),T_1({\bf x}),T_2({\bf x}),\ldots)$.
Then it is not difficult to verify that
$$H({\bf x})=M({\bf x})T({\bf x})M({\bf x})^t,$$ see \cite[(2.5)]{Aig01}.
By the Cauchy-Binet Theorem, any minor of $H({\bf x})$ can be expressed of
sums of products of minors of $M({\bf x})$, $T({\bf x})$, and $M({\bf x})^t$.
It follows that if $M({\bf x})$, $T({\bf x})$, and $M({\bf x})^t$ are ${\bf x}$-TP matrices,
then $H({\bf x})$ is ${\bf x}$-TP.
It is clear that $T({\bf x})$ is ${\bf x}$-TP and
$M({\bf x})$ is ${\bf x}$-TP if and only  if $M({\bf x})^t$ is ${\bf x}$-TP. Thus
to show that $H({\bf x})$ is ${\bf x}$-TP, we need only show that $M({\bf x})$ is
${\bf x}$-TP.
Thus we need only show that $J(x)$ being ${\bf x}$-TP
implies $M({\bf x})$ is ${\bf x}$-TP.
Let $M_n({\bf x})=[m_{i,j}(\bf x)]_{0\le i,j\le n}$ be the $n$-th leading principal submatrix of $M({\bf x})$.
Clearly, to show that $M({\bf x})$ is ${\bf x}$-TP, it suffices to show that $M_n({\bf x})$ are ${\bf x}$-TP for $n\ge 0$.
We do this by induction on $n$.
Obviously, $M_0({\bf x})$ is ${\bf x}$-TP.
Assume that $M_n({\bf x})$ is ${\bf x}$-TP.
Then one can easily show that (\ref{rec:main}) implies that
\begin{equation*}\label{r=rj}
M_{n+1}({\bf x})=\left[\begin{array}{cc}e_{n+1}({\bf x}) \\ 0_n & M_n({\bf x})\\\end{array}\right]
\left[\begin{array}{c}e_{n+1}({\bf x})\\ J_n({\bf x})\\\end{array}\right],
\end{equation*}
where $e_{n+1}({\bf x})=[1,0,\ldots,0]$, $0_n$ is column of $n$ 0s, and $J_n({\bf x})$ is the
$n \times ( n+1)$ principal submatrix of $J({\bf x})$.
By the induction hypothesis, $M_n({\bf x})$ is ${\bf x}$-TP so that
$\left[\begin{array}{cc}1 & 0\\ 0 & M_n({\bf x})\\\end{array}\right]$ is ${\bf x}$-TP.
On the other hand,
$J_n({\bf x})$ is ${\bf x}$-TP since it is a submatrix of the
${\bf x}$-TP matrix $J({\bf x})$,
so that $\left[\begin{array}{c}e_{n+1}({\bf x})\\ J_n({\bf x})\\\end{array}\right]$ is ${\bf x}$-TP. Applying the Cauchy-Binet Theorem again, we see that
$M_{n+1}({\bf x})$ is ${\bf x}$-TP.
\end{proof}

Given a polynomial $a({\bf x}) = \sum_{(i_1, \ldots, i_n) \in I} c_{i_1, \ldots, i_n} x_1^{i_1} \cdots x_n^{i_n}$ where $I$ is finite index set and $c_{i_1, \ldots, i_n} \neq 0$ for
all $(i_1, \ldots, i_n) \in I$, we let the degree of $a({\bf x})$, $\mathrm{deg}(a({\bf x}))$, equal $\max(\{i_1+ \cdots + i_n: (i_1, \ldots, i_n) \in I\})$.  We say that $a({\bf x})$ is homogeneous of degree $n$ if
$i_1+ \cdots + i_n =n$ for all $(i_1, \ldots, i_n) \in I$ and is inhomogeneous otherwise.  If  $a({\bf x})$ had degree $n$, then we let
$$H_{x_0}(a({\bf x})) =x_0^n a\left(\frac{x_1}{x_0},\frac{x_2}{x_0}, \ldots ,\frac{x_n}{x_0}\right).$$
For example, if $a(x_1,x_2) = 1+x_1 +x_1 x_2 +x_1^3$, then
$$H_{x_0}(a(x_1,x_2))= x_0^3\left(1+\frac{x_1}{x_0} +\frac{x_1}{x_0}\frac{x_2}{x_0}+\frac{x_1}{x_0}\frac{x_1}{x_0}\frac{x_1}{x_0}\right) = x_0^3 + x_0^2 x_1 + x_0x_1x_2+x_1^3.$$
Clearly if $\mathrm{deg}(a({\bf x}))=n$, then
$H_{x_0}(a({\bf x}))$ is a homogeneous polynomial of degree $n$.
\begin{theorem}\label{H}  Suppose that $\alpha = (a_0({\bf x}),a_1({\bf x}),a_2({\bf x}),\ldots )$ is a Stieltjes moment sequence of polynomials such that for all $n \geq 0$,
$\mathrm{deg}(a_n({\bf x}))=n$.
Then $H_{x_0}(\alpha) = (H_{x_0}(a_0({\bf x})), H_{x_0}(a_1({\bf x})),\ldots )$ is a Stieltjes moment sequence of polynomials.
\end{theorem}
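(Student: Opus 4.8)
The plan is to realize the Hankel matrix of $H_{x_0}(\alpha)$ as a diagonal conjugate of a substitution instance of the Hankel matrix of $\alpha$, and then track how minors transform. Since $\deg(a_n({\bf x})) = n$ by hypothesis, the definition of $H_{x_0}$ gives $H_{x_0}(a_n({\bf x})) = x_0^n\, a_n(x_1/x_0,\dots,x_n/x_0)$, so if I set $D = \mathrm{diag}(1,x_0,x_0^2,\dots)$ (rows and columns indexed from $0$), then the Hankel matrix of $H_{x_0}(\alpha)$ is
\[
\widetilde H \;:=\; \big[\,H_{x_0}(a_{i+j}({\bf x}))\,\big]_{i,j\geq 0} \;=\; D\,\big[\,a_{i+j}(x_1/x_0,\dots,x_n/x_0)\,\big]_{i,j\geq 0}\,D .
\]
Thus the theorem amounts to showing that $\widetilde H$ is totally positive in the $n+1$ variables $x_0,x_1,\dots,x_n$, i.e.\ that every minor of $\widetilde H$ is a polynomial in $x_0,x_1,\dots,x_n$ with nonnegative coefficients.

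First I would fix an arbitrary $k\times k$ minor of $\widetilde H$, coming from rows $i_1<\cdots<i_k$ and columns $j_1<\cdots<j_k$, and write $P$ for the corresponding $k\times k$ submatrix $[\,a_{i_p+j_q}({\bf x})\,]_{1\leq p,q\leq k}$ of $H(\alpha,{\bf x})$, with $m := \det P \in \mathbb{R}[{\bf x}]$. Because $D$ is diagonal, the submatrix of $\widetilde H$ on these rows and columns factors as $\mathrm{diag}(x_0^{i_1},\dots,x_0^{i_k})\cdot\big(P|_{x_\ell\mapsto x_\ell/x_0}\big)\cdot\mathrm{diag}(x_0^{j_1},\dots,x_0^{j_k})$, so multiplicativity of the determinant gives
\[
\det\widetilde H\big[\{i_1,\dots,i_k\},\{j_1,\dots,j_k\}\big] \;=\; x_0^{\,N}\, m\big(x_1/x_0,\dots,x_n/x_0\big), \qquad N := \sum_{p=1}^k i_p + \sum_{q=1}^k j_q .
\]
Since $\alpha$ is a Stieltjes moment sequence of polynomials, $H(\alpha,{\bf x})$ is ${\bf x}$-TP, hence $m \geq_{{\bf x}} 0$; write $m({\bf x}) = \sum c_{\ell_1,\dots,\ell_n}\, x_1^{\ell_1}\cdots x_n^{\ell_n}$ with all $c_{\ell_1,\dots,\ell_n}\geq 0$.

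The last step is the degree bound $\deg(m)\leq N$. Each term of the Leibniz expansion of $\det P$ is $\pm\prod_{p} a_{i_p + j_{\sigma(p)}}({\bf x})$, a product whose degree is at most $\sum_p\big(i_p + j_{\sigma(p)}\big) = N$ because $\deg(a_s({\bf x})) = s$; hence $\deg(m)\leq N$, i.e.\ $\ell_1+\cdots+\ell_n\leq N$ for every monomial occurring in $m$. Consequently
\[
x_0^{\,N}\, m\big(x_1/x_0,\dots,x_n/x_0\big) \;=\; \sum c_{\ell_1,\dots,\ell_n}\; x_0^{\,N-(\ell_1+\cdots+\ell_n)}\, x_1^{\ell_1}\cdots x_n^{\ell_n}
\]
is an honest polynomial in $x_0,x_1,\dots,x_n$ with nonnegative coefficients. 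This holds for every minor, so $\widetilde H$ is totally positive in $x_0,x_1,\dots,x_n$, which is exactly the assertion that $H_{x_0}(\alpha)$ is a Stieltjes moment sequence of polynomials.

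The hypothesis $\deg(a_n({\bf x})) = n$ enters twice: once to identify the Hankel matrix of $H_{x_0}(\alpha)$ as $D\,[a_{i+j}(x_1/x_0,\dots,x_n/x_0)]\,D$, and once, crucially, in the bound $\deg(m)\leq N$. The latter is the only real obstacle: it is precisely what prevents negative powers of $x_0$ from surviving the substitution $x_\ell\mapsto x_\ell/x_0$, so that each minor is a genuine polynomial rather than a Laurent polynomial. Everything else is routine bookkeeping with the diagonal conjugation and the Cauchy-Binet-type multiplicativity of determinants already used in the proof of Theorem \ref{HMJ}.
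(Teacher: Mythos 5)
Your proposal is correct and follows essentially the same route as the paper's proof: both arguments identify each minor of the new Hankel matrix as $x_0^{N}$ times the corresponding minor of $H(\alpha,{\bf x})$ evaluated at $x_\ell/x_0$, invoke the ${\bf x}$-total positivity of $H(\alpha,{\bf x})$ for nonnegativity of the coefficients, and use the degree bound coming from $\mathrm{deg}(a_s({\bf x}))=s$ to rule out negative powers of $x_0$. Your packaging of the prefactor via the diagonal conjugation $D\,[\cdot]\,D$ is a tidy reformulation of the same computation; no substantive difference.
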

\begin{proof}
Let
$${\bf H} = [H_{x_0}(a_{i+j}({\bf x}))]_{i,j \geq 0} =
\begin{bmatrix} H_{x_0}(a_0({\bf x})) & H_{x_0}(a_1({\bf x})) & H_{x_0}(a_2({\bf x})) & H_{x_0}(a_3({\bf x})) & \ldots \\
H_{x_0}(a_1({\bf x})) & H_{x_0}(a_2({\bf x})) & H_{x_0}(a_3({\bf x})) & H_{x_0}(a_4({\bf x})) & \ldots \\
H_{x_0}(a_2({\bf x})) & H_{x_0}(a_3({\bf x})) & H_{x_0}(a_4({\bf x})) & H_{x_0}(a_5({\bf x})) & \ldots \\
H_{x_0}(a_3({\bf x})) & H_{x_0}(a_4({\bf x})) & H_{x_0}(a_5({\bf x})) & H_{x_0}(a_6({\bf x})) & \ldots\\
\vdots&\vdots&\vdots&\vdots&\ddots
\end{bmatrix}
$$
and
$$
H= H(\alpha,{\bf x}) = [a_{i+j}({\bf x})]_{i,j \geq 0} =
\begin{bmatrix} a_0({\bf x}) & a_1({\bf x}) & a_2({\bf x}) & a_3({\bf x}) & \ldots \\
a_1({\bf x}) & a_2({\bf x}) & a_3({\bf x}) & a_4({\bf x}) & \ldots \\
a_2({\bf x}) & a_3({\bf x}) & a_4({\bf x}) & a_5({\bf x}) & \ldots \\
a_3({\bf x}) & a_4({\bf x}) & a_5({\bf x}) & a_6({\bf x}) & \ldots\\
\vdots&\vdots&\vdots&\vdots&\ddots
\end{bmatrix}.
$$

Since $\alpha$ is a Stieltjes moment sequence of polynomials, we know that if $1 \leq i_1 < \cdots < i_k$ and $1 \leq j_1 < \cdots < j_k$, then
the minor $\mathrm{det}(H[\{i_1,\ldots, i_k\},\{j_1,\ldots, j_k\}])$ equals
$$\sum_{\sg \in S_k} \mathrm{sgn}(\sg) \prod_{s=1}^k a_{i_s-1+j_{\sg(s)}-1}({\bf x}) =
\sum_{(r_1, \ldots, r_n) \in I(i_1, \ldots, i_n;j_1, \ldots, j_n)} c_{r_1,\ldots, r_n} x_1^{r_1} \cdots x_n^{r_n}$$
for some finite index set $I(i_1, \ldots, i_n;j_1, \ldots, j_n)$ where $c_{r_1,\ldots, r_n}\geq 0$ for all \\
$(r_1, \ldots, r_n) \in I(i_1, \ldots, i_n;j_1, \ldots, j_n)$.
Since $a_n({\bf x}) \geq_{{\bf x}} 0$ and $\mathrm{deg}(a_n({\bf x}))=n$, the degree of any term of the form
$\prod_{s=1}^k a_{i_s-1+j_{\sg(s)}-1}({\bf x})$ is
$$\sum_{s=0}^k i_s-1+j_{\sg(s)}-1 = \sum_{s=0}^k i_s-1+j_{s}-1.$$
Thus the degree of $\mathrm{det}(H[\{i_1,\ldots, i_k\},\{j_1,\ldots, j_k\}])$ less than or equal to  $\sum_{s=0}^k i_s-1+j_{s}-1$.
In particular, $r_1 + \cdots +r_n \leq \sum_{s=0}^k i_s-1+j_{s}-1$ for all
$(r_1,\ldots, r_n) \in I(i_1, \ldots, i_n;j_1, \ldots, j_n)$. But then
$\mathrm{det}({\bf H}[\{i_1,\ldots, i_k\},\{j_1,\ldots, j_k\}])$ equals
\begin{eqnarray*}
&&\sum_{\sg \in S_k} \mathrm{sgn}(\sg) \prod_{s=1}^k x_0^{i_s-1+j_{\sg(s)}-1}a_{i_s-1+j_{\sg(s)}-1}\left(\frac{x_1}{x_0}, \ldots, \frac{x_n}{x_0}\right) \\
&&=x_0^{\sum_{s=0}^k i_s-1+j_{s}-1} \sum_{\sg \in S_k} \mathrm{sgn}(\sg) \prod_{s=1}^k a_{i_s-1+j_{\sg(s)}-1}\left(\frac{x_1}{x_0}, \ldots, \frac{x_n}{x_0}\right) \\
&&=x_0^{\sum_{s=0}^k i_s-1+j_{s}-1} \sum_{(r_1, \ldots, r_n) \in I(i_1, \ldots, i_n;j_1, \ldots, j_n)} c_{r_1,\ldots, r_n} \left(\frac{x_1}{x_0}\right)^{r_1}\cdots \left(\frac{x_n}{x_0}\right)^{r_n}.
\end{eqnarray*}
By our remarks above, $x_0^{\sum_{s=0}^k i_s-1+j_{s}-1} \sum_{(r_1, \ldots, r_n) \in I(i_1, \ldots, i_n;j_1, \ldots, j_n)} c_{r_1,\ldots, r_n} \left(\frac{x_1}{x_0}\right)^{r_1}\cdots \left(\frac{x_n}{x_0}\right)^{r_n}$ is
polynomial in
$x_0,{\bf x}$ with nonnegative coefficients. Thus ${\bf H}(\alpha)$ is $(x_0,{\bf x})$-TP so that $H_{x_0}(\alpha)$ is a Stieltjes moment sequence of polynomials.
\end{proof}
\section{Applications}

In this section, we shall use the results of the previous section
to produce many combinatorially interesting examples
of Stieltjes moment sequences of polynomials.\\
\ \\
{\bf Example 3.1.} Let $\pi =(r_1(q),r_2(q), r_3(q), \ldots ) = (1,1,1,\ldots)$,
$\sigma=(s_0(q),s_1(q), s_2(q), \ldots ) = (1,1+q,1+q,\ldots)$ and
$\tau=(t_1(q),t_2(q),t_3(q), \ldots ) = (q,q,q,\ldots)$.  It is
easy to check that these sequences satisfy the hypothesis
of Lemma \ref{key}. In this case, we are considering
the polynomials defined by
\begin{eqnarray*}
a_{0,0}(q) &=& 1,\\
a_{n+1,0}(q) &=& a_{n,0}(q)+q a_{n,1}(q) \ \mbox{for $n \geq 1$},
\ \mbox{and}\\
a_{n+1,k}(q)&=& a_{n,k-1}(q)+(1+q)a_{n,k}(q)+q a_{n,k+1}(q) \ \mbox{for} \
1 \leq k \leq n.
\end{eqnarray*}
where $a_{n,k}(q)=0$ unless $n\geq k\geq 0$.
In this case, the underlying combinatorial objects are Motzkin paths
where the weights of the up-steps are 1, the weights of the down-steps are
$q$ and the weights of the level-steps are 1 at level 0 and $1+q$ at
levels $k > 0$. Thus we can interpret $a_{n,k}(q)$ as  the sum of the weights
of Motzkin paths that start at $(0,0)$ and end at $(n,k)$.
For example, if $A(q) = [a_{n,k}(q)]$, then
\begin{equation*}\
  A(q)=\left[
      \begin{array}{cccccc}
        1 &  &  &  &  &  \\
        1 & 1 &   &   &   &   \\
        1+q & 2+q & 1 &   &   &   \\
        1+3q+q^2 & 3+5q+q^2 & 3+2q & 1 &   &   \\
        \vdots & \vdots & \vdots& \vdots &  & \ddots \\
      \end{array}
    \right].
\end{equation*}

\begin{proposition}
The sequence $(a_{n,0}(q))_{n\ge 0}$ is a
Stieltjes moment sequence of polynomials.
\end{proposition}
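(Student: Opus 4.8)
The plan is to deduce the statement from Theorem~\ref{HMJ}. Here we are in the one‑variable case ${\bf x}=q$, and the given data are $\pi=(r_1(q),r_2(q),\dots)=(1,1,\dots)$, $\sigma=(s_0(q),s_1(q),\dots)=(1,1+q,1+q,\dots)$, and $\tau=(t_1(q),t_2(q),\dots)=(q,q,\dots)$. Since $r_k(q)=1$ for all $k$, Theorem~\ref{HMJ} tells us it suffices to show that the tridiagonal coefficient matrix $J=J^{(\pi,\sigma,\tau)}(q)$ is $q$-TP; then automatically $(m_{n,0}(q))_{n\ge 0}=(a_{n,0}(q))_{n\ge 0}$ is a Stieltjes moment sequence of polynomials, i.e.\ its Hankel matrix is $q$-TP.

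To establish that $J$ is $q$-TP I would invoke Lemma~\ref{key}, which reduces the task to checking the three hypotheses (i)--(iii) for our choice of $\pi,\sigma,\tau$ (all of whose entries are nonzero polynomials with nonnegative coefficients). Condition (i) is $s_0(q)-1=1-1=0\geq_q 0$. For condition (ii): when $i=0$, $s_0(q)s_1(q)-t_1(q)r_1(q)=(1+q)-q=1\geq_q 0$; when $i\ge 1$, $s_i(q)s_{i+1}(q)-t_{i+1}(q)r_{i+1}(q)=(1+q)^2-q=1+q+q^2\geq_q 0$. For condition (iii): for every $i\ge 0$, $s_{i+1}(q)-t_{i+1}(q)r_{i+1}(q)-1=(1+q)-q-1=0\geq_q 0$. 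Having verified (i)--(iii), Lemma~\ref{key} yields that $J$ is $q$-TP, and Theorem~\ref{HMJ} finishes the argument.

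I do not expect a genuine obstacle here: the entire content of the proof is the three one-line verifications above, each reducing to a constant or a manifestly $q$-nonnegative polynomial. For robustness one could give a second, independent derivation of the $q$-total positivity of $J$ via Lemma~\ref{product}: taking $b_1(q)=0$, $b_n(q)=q$ for $n\ge 2$, and $c_n(q)=1$ for $n\ge 1$ gives $s_0(q)=b_1(q)+c_1(q)=1$, $s_k(q)=b_{k+1}(q)+c_{k+1}(q)=1+q$ for $k\ge 1$, and $t_k(q)=c_k(q)b_{k+1}(q)=q$ for $k\ge 1$, so that $J=J^{b,c}$ is $q$-TP by Lemma~\ref{product}; combined with $r_k(q)\equiv 1$ this again lets Theorem~\ref{HMJ} apply.
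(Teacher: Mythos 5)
Your proposal is correct and follows exactly the paper's intended route: the paper likewise disposes of this example by noting that $\pi,\sigma,\tau$ satisfy the hypotheses of Lemma~\ref{key} (which you verify explicitly) and then applying Theorem~\ref{HMJ}. Your alternative derivation via Lemma~\ref{product} with $b_1=0$, $b_n=q$ for $n\ge 2$, $c_n=1$ is also valid, though not needed.
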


A {\it Riordan array}, denoted by $(d(x),h(x))$, is an infinite lower triangular matrix
whose generating function of the $k$th column is $x^kh^k(x)d(x)$ for $k=0,1,2,\ldots$,
where $d(0)=1$ and $h(0)\neq 0$ \cite{SGWW91}.
A Riordan array $R=[r_{n,k}]_{n,k\ge 0}$
can be characterized by two sequences
$(a_n)_{n\ge 0}$ and $(z_n)_{n\ge 0}$ such that
\begin{equation}\label{rrr-c}
r_{0,0}=1,\quad r_{n+1,0}=\sum_{j\ge 0}z_jr_{n,j},\quad r_{n+1,k+1}=\sum_{j\ge 0}a_jr_{n,k+j}
\end{equation}
for $n,k\ge 0$ (see \cite{HS09} for instance).
Call $(a_n)_{n\ge 0}$ and $(z_n)_{n\ge 0}$
the $A$- and $Z$-sequences of $R$ respectively.
Let $Z(x)=\sum_{n\ge 0}z_nx^n$ and $A(x)=\sum_{n\ge 0}a_nx^n$
be the generating functions of $(z_n)_{n\ge 0}$ and $(a_n)_{n\ge 0}$ respectively.
Then it follows from \eqref{rrr-c} that
\begin{equation}\label{fg}
  d(x)=\frac{1}{1-xZ(xh(x))} \ \mbox{and} \ h(x)=A(xh(x)).
\end{equation}

The matrix $R(a,b;c,e)=[r_{n,k}]_{n,k\ge 0}$, where
\begin{equation}\label{rr}
\left\{
  \begin{array}{ll}
    r_{0,0}=1,\quad r_{n+1,0}=ar_{n,0}+br_{n,1},\\
    r_{n+1,k+1}=r_{n,k}+cr_{n,k+1}+er_{n,k+2},
  \end{array}
\right.
\end{equation}
is called the recursive matrix.
The coefficient matrix of the recursive matrix \eqref{rr} is defined to
be
\begin{equation}\label{J-pqst}
J(p,q;s,t)=\left[
\begin{array}{ccccc}
a & 1 &  &  &\\
b & c & 1 &\\
 & e & c & 1 &\\
 & & e & c & \ddots\\
& & &\ddots & \ddots \\
\end{array}\right].
\end{equation}

Now $R(a,b;c,e)$ is a Riordan array with $Z(x)=a+bx$ and $A(x)=1+cx+ex^2$.
Let $R(a,b;c,e)=(d(x),h(x))$.
Then by \eqref{fg}, we have
$$d(x)=\frac{1}{1-x(a+bxh(x))} \ \mbox{and} \ h(x)=1+cxh(x)+ex^2h^2(x).$$
It follows that
$$h(x)=\frac{1-cx-\sqrt{1-2cx+(c^2-4e)x^2}}{2ex^2}$$
and
$$d(x)=\frac{2e}{2e-b+(bc-2ae)x+b\sqrt{1-2cx+(c^2-4e)x^2}}$$
(see \cite{WZ-LAA} for details).
From this formula we can now derive a number of interesting examples.
For example, taking $a=1$, $b=q$, $c=1+q$ and $e=q$ in \eqref{J-pqst},
we obtain the generating function of the $(a_{n,0}(q))$ is
$$d_A(x,q)= \sum_{n \geq 0} a_{n,0}(q) x^n =
\frac{2}{1+(q-1)x+\sqrt{1-2(1+q)x+(1-q)^2x^2}}.$$

\begin{rem}
\begin{enumerate}
\item When we set $q =1$ in $A(q)$, we obtain the
Catalan triangle of Aigner~\cite{Aig99}.
See also sequence \cite[A039599]{OEIS} in the On-line Encyclopedia
of Integer Sequences.
It follows that
$a_{n,0}(1) =C_n$ where $C_n = \frac{1}{n+1}\binom{2n}{n}$ is the
$n$-th Catalan number \cite[A000108]{OEIS}. Hence
$a_{n,0}(q)$ is a $q$-analogue of the Catalan number $C_n$.

\item When we set $q =2$ in $A(q)$, we obtain the triangle
\cite[A172094]{OEIS} and  $a_{n,0}(2)$ are the little Schr\"oder numbers
$S_n$ \cite[A001003]{OEIS}.
It follows that
$a_{n,0}(2q)$ is a $q$-analogue of $n$-th little Schr\"oder number $S_n$.

\item When we set $q=3$, the sequence $(a_{n,0}(3))_{n \geq 0}$ is sequence
 \cite[A007564]{OEIS}. It follows that
$a_{n,0}(3q)$ is a $q$-analogue of the sequence \cite[A007564]{OEIS}.

\item When we set $q=4$, the sequence $(a_{n,0}(4))_{n \geq 0}$ is sequence
 \cite[A059231]{OEIS}. It follows that
$a_{n,0}(4q)$ is a $q$-analogue of the sequence \cite[A059231]{OEIS}.

\end{enumerate}
\end{rem}

We know that for any $m \geq 1$, the sequence $(a_{n,0}(m))_{n \geq 0}$ is
a Stieltjes moment sequence. In particular, the
Catalan numbers $C_n$ and the little Schr\"oder numbers $S_n$ are a Stieltjes moment sequences. \\
\ \\
{\bf Example 3.2.} Let
\begin{eqnarray*}
\pi &=&(r_1(q),r_2(q),r_3(q), \ldots ) = (1,1,1,\ldots),\\
\sigma&=& (s_0(q),s_1(q),s_2(q), \ldots ) = (1+q+q^2,1+q+q^2,1+q+q^2,\ldots),
\ \mbox{and} \\
\tau&=&(t_1(q),t_2(q),t_3(q), \ldots ) = (q,q,q,\ldots).
\end{eqnarray*}
  It is
easy to check that these sequences satisfy the hypothesis
of Lemma \ref{key}. In this case, we are considering
the polynomials defined by
\begin{eqnarray*}
b_{0,0}(q) &=& 1, \\
b_{n+1,0}(q) &=& (1+q+q^2) b_{n,0}(q)+q b_{n,1}(q) \ \mbox{for $n \geq 1$}, \mbox{and} \\
b_{n+1,k}(q)&=&b_{n,k-1}(q)+(1+q+q^2)b_{n,k}(q)+q b_{n,k+1}(q) \ \mbox{for
$1 \leq k \leq n$}
\end{eqnarray*}
where $b_{n,k}(q)=0$ unless $n\ge k\ge 0$.
In this case, the underlying combinatorial objects are Motzkin paths
where the weights of the up-steps are 1, the weights of the down-steps are
$q$, and the weights of the level-steps $1+q+q^2$. Thus we can interpret $b_{n,k}(q)$ as  the sum of the weights
of Motzkin paths that start at $(0,0)$ and end at $(n,k)$.
For example, if $B(q) = [b_{n,k}(q)]$, then
\begin{equation*}\
  B(q)=\left[
      \begin{array}{cccccc}
        1 &  &  &  &  &  \\
        1 +q +q^2 & 1 &   &   &   &   \\
        1+3q+3q^2 + 2q^3 +q^4& 2+2q +2q^2 & 1 &   &   &   \\
        \left( 1+6q+9q^2+10q^3+\right.& 3+8q+9q^2 +6q^3+3q^4
& 3+3q +3q^2& 1 &   &   \\
\left. 6q^4+3q^5+q^6 \right) & & & & & \\
\vdots & \vdots & \vdots& \vdots &  & \ddots \\
      \end{array}
    \right].
\end{equation*}

\begin{proposition}
The sequence $(b_{n,0}(q))_{n\ge 0}$ is a
Stieltjes moment sequence of polynomials.
\end{proposition}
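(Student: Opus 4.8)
The plan is to run the proposition through the two main engines of Section~2: first check that the coefficient matrix $J^{(\pi,\sg,\tau)}(q)$ satisfies the three numerical hypotheses of Lemma~\ref{key}, which forces $J$ to be $q$-TP, and then invoke Theorem~\ref{HMJ} to conclude that the Hankel matrix of $(b_{n,0}(q))_{n\ge 0}$ is $q$-TP, i.e.\ that $(b_{n,0}(q))_{n\ge 0}$ is a Stieltjes moment sequence of polynomials. Here the relevant data is $r_k(q)=1$, $s_k(q)=1+q+q^2$, and $t_{k+1}(q)=q$, and the polynomials $b_{n,k}(q)$ are precisely the $m_{n,k}(q)$ generated by the recursion \eqref{rec:main} for this choice of $(\pi,\sg,\tau)$.

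The verification of the hypotheses of Lemma~\ref{key} is the only computation involved. For condition~(i) one observes $s_0(q)-1 = q+q^2 \ge_q 0$. For condition~(ii) one computes, for every $i\ge 0$,
$$
s_i(q)s_{i+1}(q)-t_{i+1}(q)r_{i+1}(q) = (1+q+q^2)^2 - q = 1+q+3q^2+2q^3+q^4 \ge_q 0 .
$$
For condition~(iii) one computes, for every $i\ge 0$,
$$
s_{i+1}(q)-t_{i+1}(q)r_{i+1}(q)-1 = (1+q+q^2)-q-1 = q^2 \ge_q 0 .
$$
Hence Lemma~\ref{key} applies and $J=J^{(\pi,\sg,\tau)}(q)$ is $q$-TP.

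Since every entry of $\pi$ equals $1$, the standing hypothesis of Theorem~\ref{HMJ} that $r_k(q)=1$ for all $k$ is met, and we have just shown that the coefficient matrix $J$ is $q$-TP. Theorem~\ref{HMJ} then yields immediately that $(b_{n,0}(q))_{n\ge 0}$ is a Stieltjes moment sequence of polynomials, completing the proof.

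I expect no real obstacle: the argument is a direct application of the two prior results, and the only "work," the check of (i)--(iii), reduces to the elementary observation that $(1+q+q^2)^2-q$ and $(1+q+q^2)-q-1$ both have nonnegative coefficients. (One could alternatively try to route through Lemma~\ref{product} by seeking sequences $b_n(q),c_n(q)$ with $s_n=b_{n+1}+c_{n+1}$, $t_n=c_nb_{n+1}$, and $r_n=1$; but writing $1+q+q^2$ compatibly as such a sum all the way down the diagonal is awkward, so the Lemma~\ref{key} route is the cleaner one.)
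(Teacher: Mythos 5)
Your proposal is correct and matches the paper's approach exactly: the paper simply asserts that the sequences $\pi,\sg,\tau$ of Example 3.2 "satisfy the hypothesis of Lemma \ref{key}" and then concludes via Theorem \ref{HMJ}, which is precisely the route you take. Your explicit verifications of (i)--(iii), including the expansion $(1+q+q^2)^2-q=1+q+3q^2+2q^3+q^4$, are all accurate and merely spell out what the paper leaves as "easy to check."
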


Taking $a=1+q+q^2$, $b=q$, $c=1+q+q^2$ and $e=q$ in \eqref{J-pqst},
we obtain the generating function of the $(b_{n,0}(q))$ is
$$d_B(x,q)= \sum_{n \geq 0} b_{n,0}(q)x^n =
\frac{2}{1-(1+q+q^2)x+\sqrt{1-2(1+q+q^2)x+((1+q+q^2)^2-4q)x^2}}.$$

\begin{rem}
In this case, the triangle $B(1)$ is \cite[A091965]{OEIS} and the
first column $(b_{n,0}(1))_{n \geq 0}$ is sequence \cite[A002212]{OEIS}.
Clearly $b_{n,0}(1)$ counts the number of 3-colored Motzkin paths of
length $n$ and the number of restricted hexagonal polyominoes with
$n$ cells.
\end{rem}

It follows that for any $m \geq 1$, the sequence $(d_{n,0}(m))_{n \geq 0}$ is
a Stieltjes moment sequence. In particular, the sequence which
counts the number of restricted hexagaonal polynominoes
is a Stieltjes momoment sequence. \\
\ \\
{\bf Example 3.3.} Let
\begin{eqnarray*}
\pi &=&(r_1(p,q),r_2(p,q),r_3(p,q), \ldots ) = (1,1,1,\ldots), \\
\sigma&=& (s_0(p,q),s_1(p,q),s_2(p,q), \ldots ) = (1+p+q,1+p+q,1+p+q,\ldots),
\ \mbox{and} \\
\tau &=&(t_1(p,q),t_2(p,q),t_3(p,q), \ldots ) = (q,q,q,\ldots).
\end{eqnarray*}  It is
easy to check that these sequences satisfy the hypothesis
of Lemma \ref{key}. In this case, we are considering
the polynomials defined by
\begin{eqnarray*}
c_{0,0}(p,q) &=& 1,\\
c_{n+1,0}(p,q) &=& (1+p+q) c_{n,0}(p,q)+q c_{n,1}(p,q) \ \mbox{for $n \geq 1$}, \
\mbox{and} \\
c_{n+1,k}(p,q)&=&c_{n,k-1}(p,q)+(1+p+q)c_{n,k}(p,q)+q c_{n,k+1}(p,q) \ \mbox{for
$1 \leq k \leq n$}
\end{eqnarray*}
where $c_{n,k}(p,q)=0$ unless $n\ge k\ge 0$.
In this case, the underlying combinatorial objects are Motzkin paths
where the weights of the up-steps are 1, the weights of the down-steps are $q$, and the weights of the level-steps $1+p+q$. Thus we can interpret $c_{n,k}(p,q)$ as  the sum of the weights
of Motzkin paths that start at $(0,0)$ and end at $(n,k)$. In particular,
we can interpret $c_{n,}(p,q)$ as weighted sum over three
colored Motzkin paths.
That is, the levels of the Motzkin path can be colored with one of
three colors, namely, color 0 which has weight 1, color 1 which has weight $q$,
and color 2 which has weight $p$, and the down-steps have weight $q$.
For example, if $C(p,q) = [c_{n,k}(p,q)]$, then
\begin{equation*}\
  C(p,q)=\left[
      \begin{array}{cccccc}
        1 &  &  &  &  &  \\
        1 +p+q & 1 &   &   &   &   \\
        1+3p+2q +2pq+p^2+q^2 & 2+2p+2q & 1 &   &   &   \\
        \left(1+6p+3q+6p^2+9pq+3q^2+\right. & \left( 3+8p+6q+\right. & 3+3p+3q & 1 &   &   \\
         \left. p^3+3p^2q+3pq^2+q^3\right)   &\left. 3p^2+6pq+3q^2 \right)                    &         &   &   & \\
\vdots & \vdots & \vdots&\vdots  &  & \ddots \\
      \end{array}
    \right].
\end{equation*}

\begin{proposition}
The sequence
$(c_{n,0}(p,q))_{n \geq 0}$ is a
Stieltjes moment sequence of polynomials.
\end{proposition}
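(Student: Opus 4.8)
The plan is to apply Lemma \ref{key} to show that the coefficient matrix is $(p,q)$-totally positive, and then invoke Theorem \ref{HMJ} to conclude. First I would record the tridiagonal coefficient matrix
$$J := J^{(\pi,\sigma,\tau)}(p,q) =
\begin{bmatrix}
1+p+q & 1 & & \\
q & 1+p+q & 1 & \\
& q & 1+p+q & \ddots \\
& & \ddots & \ddots
\end{bmatrix}
$$
and verify the three hypotheses of Lemma \ref{key} with ${\bf x} = (p,q)$: hypothesis (i) holds since $s_0(p,q)-1 = p+q \geq_{p,q} 0$; hypothesis (ii) holds since $s_i(p,q)s_{i+1}(p,q) - t_{i+1}(p,q)r_{i+1}(p,q) = (1+p+q)^2 - q = 1 + 2p + q + p^2 + 2pq + q^2 \geq_{p,q} 0$; and hypothesis (iii) holds since $s_{i+1}(p,q) - t_{i+1}(p,q)r_{i+1}(p,q) - 1 = (1+p+q) - q - 1 = p \geq_{p,q} 0$. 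By Lemma \ref{key}, $J$ is therefore $(p,q)$-TP.

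Since in addition $r_k(p,q) = 1$ for all $k$, the matrix $J$ satisfies the hypotheses of Theorem \ref{HMJ} (taking ${\bf x} = (p,q)$). Applying that theorem to the lower triangular matrix $M^{\pi,\sigma,\tau}(p,q) = [c_{n,k}(p,q)]$ defined by the recursions (\ref{rec:main}), we conclude that the first-column sequence $(c_{n,0}(p,q))_{n\geq 0}$ is a Stieltjes moment sequence of polynomials; equivalently, the Hankel matrix $[c_{i+j,0}(p,q)]_{i,j\geq 0}$ is $(p,q)$-TP.

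I do not expect any real obstacle here: the argument is a bookkeeping check that this specific triple $(\pi,\sigma,\tau)$ meets the conditions of Lemma \ref{key}, followed by a direct appeal to Theorem \ref{HMJ}. The only point deserving a second glance is hypothesis (ii), where one must see that subtracting $t_{i+1}(p,q) = q$ from $(1+p+q)^2$ leaves a polynomial with nonnegative coefficients; this is clear because the $+q$ being cancelled is only part of the coefficient $2q$ arising from the cross term in $(1+p+q)^2$. One could optionally remark that $c_{n,0}(0,q)$ recovers $a_{n,0}(q)$ of Example 3.1, which serves as a sanity check, but this is not needed for the proof.
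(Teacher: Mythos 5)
Your proof is correct and is precisely the argument the paper intends: it asserts that the sequences of Example 3.3 satisfy the hypotheses of Lemma \ref{key} (which you verify explicitly, correctly handling the only slightly nontrivial point in hypothesis (ii)) and then concludes via Theorem \ref{HMJ}. The only slip is in your optional aside: $c_{n,0}(0,q)$ does \emph{not} recover $a_{n,0}(q)$ of Example 3.1, since there $s_0(q)=1$ whereas here $s_0(p,q)|_{p=0}=1+q$; but as you note, that remark plays no role in the proof.
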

Taking $a=1+p+q$, $b=q$, $c=1+p+q$ and $e=q$ in \eqref{J-pqst},
we obtain the generating function of the $c_{n,0}(p,q)$s is
$$d_C(x,p,q)=\sum_{n \geq 0} c_{n,0}(p,q)x^n = \frac{2}{1-(1+p+q)x+\sqrt{1-2(1+p+q)x+((1+p+q)^2-4q)x^2}}.$$

\begin{rem}
\begin{enumerate}
\item When we set $p=q =1$ in $(c_{n,0}(1,1))_{n \geq 0}$, we obtain the
$1,3,10,36,137, \ldots $  which is sequence
\cite[A002212]{OEIS}. Besides counting 3-colored Motzkin path, it also
the number of restricted hexagonal polyominoes with $n$-cells.

\item When we set $p=1$ and $q =2$ in $(c_{n,0}(p,q))_{n \geq 0}$, we obtain the
$1,4,18,88,456,2464, \ldots $ which is sequence \cite[A024175]{OEIS}.

\item When we set $p=2$ and $q =2$ in $(c_{n,0}(p,q))_{n \geq 0}$, we obtain the
$1,4,20,112,672,4224,  \ldots $ which is sequence \cite[A003645]{OEIS}
whose $n$-th term is $2^nC_{n+1}$.
\end{enumerate}
\end{rem}

There are many variations of the $t_k(p,q)$-sequence that will also
produce Steiltjes moment sequence of polynomials.
For example, suppose we define $t^{(s)}_k(p,q)$ to be $q$ if $k \leq s$
and $p$ if $k > s$ and let $\tau^{(s)} = (t^{(s)}_1(p,q),t^{(s)}_2(p,q),t^{(s)}_3(p,q), \ldots)$.
It is easy to see the sequences $\pi =(r_1(p,q),r_2(p,q),r_3(p,q), \ldots ) = (1,1,1,\ldots)$,
$\sigma=(s_0(p,q),s_1(p,q), s_2(p,q),\ldots ) = (1+p+q,1+p+q,1+p+q,\ldots)$ and
$\tau^{(s)}$ satisfy the hypothesis
of Lemma \ref{key} for all $s$. Then we can define the polynomials
$c^{(s)}_{n,k}(p,q)$ by
\begin{eqnarray*}
c^{(s)}_{0,0}(p,q) &=& 1,\\
c^{(s)}_{n+1,0}(p,q) &=& (1+p+q) c^{(s)}_{n,0}(p,q)+t^{(s)}_1(p,q) c^{(s)}_{n,1}(p,q)
\ \mbox{for $n \geq 1$}, \ \mbox{and} \\
c^{(s)}_{n+1,k}(p,q)&=&c^{(s)}_{n,k-1}(p,q)+(1+p+q)c^{(s)}_{n,k}(p,q)+t^{(s)}_{k+1}(p,q) c^{(s)}_{n,k+1}(p,q) \ \mbox{for
$1 \leq k \leq n$}
\end{eqnarray*}
where $c^{(s)}_{n,k}(p,q)=0$ unless $n\ge k\ge 0$.

\begin{proposition} For all $s \geq 0$,
$(c^{(s)}_{n,0}(p,q))_{n \geq 0}$ is a
Stieltjes moment sequence of polynomials.
\end{proposition}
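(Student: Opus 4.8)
The plan is to reduce the statement to Theorem~\ref{HMJ}, exactly as in Examples~3.1--3.3. Comparing the recursion defining $c^{(s)}_{n,k}(p,q)$ with (\ref{rec:main}), one sees that $(c^{(s)}_{n,k}(p,q))$ is precisely the family $(m_{n,k}(p,q))$ attached to $\pi=(1,1,1,\dots)$, $\sigma=(1+p+q,1+p+q,\dots)$ and $\tau^{(s)}=(t^{(s)}_1(p,q),t^{(s)}_2(p,q),\dots)$ with ${\bf x}=(p,q)$. Since $r_k(p,q)=1$ for all $k$, Theorem~\ref{HMJ} will give that $(c^{(s)}_{n,0}(p,q))_{n\ge 0}$ is a Stieltjes moment sequence of polynomials as soon as the coefficient matrix $J^{(\pi,\sigma,\tau^{(s)})}(p,q)$ is known to be $(p,q)$-TP.

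To establish that, I would verify the three hypotheses of Lemma~\ref{key}, uniformly in $s$. Hypothesis~(i) is $s_0(p,q)-1=p+q\ge_{(p,q)}0$. For hypothesis~(ii), for every $i\ge 0$ one has $s_i(p,q)s_{i+1}(p,q)-t^{(s)}_{i+1}(p,q)r_{i+1}(p,q)=(1+p+q)^2-t^{(s)}_{i+1}(p,q)$, and since $(1+p+q)^2=1+2p+2q+p^2+2pq+q^2$ while $t^{(s)}_{i+1}(p,q)\in\{p,q\}$, subtracting one variable still leaves a polynomial with nonnegative coefficients. For hypothesis~(iii), $s_{i+1}(p,q)-t^{(s)}_{i+1}(p,q)r_{i+1}(p,q)-1=p+q-t^{(s)}_{i+1}(p,q)$ equals $p$ when $t^{(s)}_{i+1}(p,q)=q$ and equals $q$ when $t^{(s)}_{i+1}(p,q)=p$, hence is $(p,q)$-nonnegative in either case. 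As each $s_k$, $r_k$, $t^{(s)}_k$ is moreover a nonzero polynomial with nonnegative coefficients, Lemma~\ref{key} applies and $J^{(\pi,\sigma,\tau^{(s)})}(p,q)$ is $(p,q)$-TP; feeding this into Theorem~\ref{HMJ} completes the proof.

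The only point that requires a moment's care — essentially the only ``obstacle'' — is to notice that the three conditions of Lemma~\ref{key} are purely pointwise in the index $i$: they never compare $t^{(s)}_i$ with $t^{(s)}_{i+1}$, nor do they demand any monotonicity of $\tau^{(s)}$. Consequently the single ``jump'' of $\tau^{(s)}$ from value $q$ to value $p$ between levels $s$ and $s+1$ is invisible to the computation above, which therefore goes through verbatim and uniformly in the parameter $s\ge 0$. (Lemma~\ref{product} is less convenient here, since the off-diagonal entries $t^{(s)}_n\in\{p,q\}$ do not admit a product factorization $b_{n+1}c_n$ compatible with $s_n=b_{n+1}+c_{n+1}=1+p+q$; the direct verification via Lemma~\ref{key} is the clean route.)
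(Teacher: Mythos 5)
Your proposal is correct and follows exactly the paper's route: the paper likewise disposes of this proposition by observing that $\pi$, $\sigma$, and $\tau^{(s)}$ satisfy the three hypotheses of Lemma \ref{key} for every $s$ (the same pointwise checks you carry out, with $p+q-t^{(s)}_{i+1}\in\{p,q\}$) and then invoking Theorem \ref{HMJ}. Your write-up merely makes explicit the verification the paper declares ``easy to check.''
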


One of the advantages of this set up  is that we can set $p=0$ in
such sequences.  In particular,
$(c^{(s)}_{n,0}(0,q))_{n \geq 0}$ is a
Stieltjes moment sequence of polynomials. In such a situation,
$c^{(s)}_{n,0}(0,q)$ is the sum over the weights of 2-colored Motzkin paths
of height $\leq s$. That is, the level steps can be colored with
color 0 which has weight 1 or colored with color 1 which has weight $q$.
The down-steps all have weight $q$ and the up-steps all have weight 1.

We can also generalize this example by adding more variables. That is, let
${\bf x} = ({\bf x})$ where $n \geq 3$ and let
$1 \leq s_1 < \cdots < s_{n-1}$.  Then let
$r_i({\bf x}) =1$ for all $i \geq 1$,
$s_i({\bf x}) =1+x_1+ \cdots + x_n$ for all $i \geq 1$, and
$t^{(s_1, \ldots, s_{n-1})}_i({\bf x})$ equal $x_1$ if $i \leq s_1$, $x_j$ if
$s_{j-1} < i \leq s_j$, and $x_n$ if $i > s_{n-1}$. Then
let $\pi =(r_1({\bf x}),r_2({\bf x}),r_3({\bf x}), \ldots ) = (1,1,1,\ldots)$,
$\sigma=(s_0({\bf x}),s_1({\bf x}),s_2({\bf x}), \ldots )$ and
$\tau^{(s_1, \ldots, s_{n-1})}=(t^{(s_1, \ldots, s_{n-1})}_1({\bf x}),t^{(s_1, \ldots, s_{n-1})}_2({\bf x}), t^{(s_1, \ldots, s_{n-1})}_3({\bf x}),\ldots )$.  It is
easy to check that for any $1 \leq s_1 < \cdots < s_{n-1}$, these
sequences satisfy the hypothesis
of Lemma \ref{key}. In this case, we are considering
the polynomials defined by \\
\ \\
$c^{(s_1, \ldots, s_{n-1})}_{0,0}({\bf x}) =1$, \\
$c^{(s_1, \ldots, s_{n-1})}_{n+1,0}({\bf x}) = (1+x_1+ \cdots +x_n) c^{(s_1, \ldots, s_{n-1})}_{n,0}({\bf x})+t^{(s_1, \ldots, s_{n-1})}_1({\bf x}) c^{(s_1, \ldots, s_{n-1})}_{n,1}({\bf x})$ for $n \geq 1$, and \\
$c^{(s_1, \ldots, s_{n-1})}_{n+1,k}({\bf x})=
c^{(s_1, \ldots, s_{n-1})}_{n,k-1}({\bf x})+(1+x_1 + \cdots
+x_n)c^{(s_1, \ldots, s_{n-1})}_{n,k}({\bf x})+
t^{(s_1, \ldots, s_{n-1})}_{k+1} c^{(s_1, \ldots, s_{n-1})}_{n,k+1}({\bf x})$\\ for
$1 \leq k \leq n$, \\
\ \\
where $c^{(s_1, \ldots, s_{n-1})}_{n,k}({\bf x})=0$ unless $n\ge k\ge 0$.
In this case, the underlying combinatorial objects are Motzkin paths
where the weights of up-steps are 1, the weights of down-steps
ending at level $k$ are $t^{(s_1, \ldots, s_{n-1})}_{k+1}({\bf x})$,
and the weights of level-steps
$1+x_1 + \cdots + x_n$. Thus we can interpret $c_{n,k}({\bf x})$ as  the sum of the weights
of Motzkin paths that start at $(0,0)$ and end at $(n,k)$. In particular,
we can interpret $c_{n,0}({\bf x})$ as weighted sum over $(n+1)$-colored Motzkin paths.
That is, the levels of the Motzkin path can be colored with one of
$(n+1)$-colors, namely, color 0 which has weight 1, color i which has weight $x_i$ for
$i =1, \ldots, n$,  and the down-steps that end at level
$k$ have weight $t^{(s_1, \ldots, s_{n-1})}_{k+1}({\bf x})$.

\begin{proposition} For all $1 \leq s_1 < \cdots < s_{n-1}$,
$(c^{(s_1, \ldots, s_{n-1})}_{n,0}({\bf x}))_{n \geq 0}$ is a
Stieltjes moment sequence of polynomials.
\end{proposition}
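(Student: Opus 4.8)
The plan is to deduce the statement from Theorem \ref{HMJ}. Since $r_k({\bf x})=1$ for all $k$ and the polynomials $c^{(s_1,\dots,s_{n-1})}_{n,k}({\bf x})$ are exactly the $m_{n,k}({\bf x})$ of the recursion (\ref{rec:main}) associated with $\pi=(1,1,\dots)$, $\sigma=(s_i({\bf x}))_{i\ge 0}$ where $s_i({\bf x})=1+x_1+\cdots+x_n$, and $\tau^{(s_1,\dots,s_{n-1})}=(t^{(s_1,\dots,s_{n-1})}_{i+1}({\bf x}))_{i\ge 0}$, Theorem \ref{HMJ} reduces everything to showing that the coefficient matrix $J^{(\pi,\sigma,\tau^{(s_1,\dots,s_{n-1})})}({\bf x})$ is ${\bf x}$-TP. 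For that I would invoke Lemma \ref{key}, so the proof comes down to verifying its three hypotheses (i)--(iii).

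The one structural remark that makes this easy is that each off-diagonal entry $t^{(s_1,\dots,s_{n-1})}_{i+1}({\bf x})$ is a single variable: adopting the conventions $s_0=0$ and $s_n=+\infty$, one has $t^{(s_1,\dots,s_{n-1})}_{i+1}({\bf x})=x_\ell$ where $\ell$ is the unique index with $s_{\ell-1}<i+1\le s_\ell$. Fix $i\ge 0$ and abbreviate $t=x_\ell$ and $S=x_1+\cdots+x_n$. Then hypothesis (i) reads $s_0({\bf x})-1=S\ge_{\bf x}0$; hypothesis (iii) reads $s_{i+1}({\bf x})-t\cdot 1-1=(1+S)-x_\ell-1=\sum_{j\ne\ell}x_j\ge_{\bf x}0$; and hypothesis (ii) reads $s_i({\bf x})s_{i+1}({\bf x})-t\cdot 1=(1+S)^2-x_\ell$. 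Since $(1+S)^2=1+2S+S^2$ has $2$ as the coefficient of $x_\ell$ and nonnegative coefficients throughout, subtracting $x_\ell$ only lowers that one coefficient to $1$, so $(1+S)^2-x_\ell\ge_{\bf x}0$. Hence all three hypotheses hold, uniformly in the choice of $1\le s_1<\cdots<s_{n-1}$.

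By Lemma \ref{key}, $J^{(\pi,\sigma,\tau^{(s_1,\dots,s_{n-1})})}({\bf x})$ is therefore ${\bf x}$-TP, and since $r_k({\bf x})=1$ for all $k$, Theorem \ref{HMJ} gives that $(c^{(s_1,\dots,s_{n-1})}_{n,0}({\bf x}))_{n\ge 0}$ is a Stieltjes moment sequence of polynomials. I do not anticipate a genuine obstacle; the only point requiring any care is the piecewise description of $\tau^{(s_1,\dots,s_{n-1})}$, and since the verification of (i)--(iii) never uses which variable sits in a given position, the piecewise nature is immaterial. (The same computation with one or two variables recovers the earlier Examples 3.1--3.3 and their Propositions as special cases.)
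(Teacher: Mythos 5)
Your proof is correct and follows exactly the route the paper takes: the paper simply asserts that the sequences $\pi$, $\sigma$, $\tau^{(s_1,\ldots,s_{n-1})}$ satisfy the hypotheses of Lemma \ref{key} and then (implicitly) invokes Theorem \ref{HMJ}, while you supply the short verification of (i)--(iii) that the paper leaves as ``easy to check.'' Your computations $(1+S)-x_\ell-1=\sum_{j\neq\ell}x_j$ and $(1+S)^2-x_\ell\ge_{\bf x}0$ are exactly the needed checks, so there is nothing to add.
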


Once again for any $2 \leq j \leq x_n$, the polynomials
$(c^{(s_1, \ldots, s_{n-1})}_{n,0}(x_1, \ldots,x_{j-1},0, \ldots, 0))_{n \geq 0}$
Stieltjes moment sequence of polynomials and
$c^{(s_1, \ldots, s_{n-1})}_{n,0}(x_1, \ldots,x_{j-1},0, \ldots, 0)$ equals
to the sum of $j$-colored Motzkin paths of length $n$ and  height less that $s_j$.\\
\ \\
{\bf Example 3.4.}
Let $[n]_{p,q} = \frac{p^n-q^n}{p-q}= p^{n-1}+qp^{n-2} + \cdots + q^{n-2}p +q^{n-1}$. Let
\begin{eqnarray*}
\pi^{(u)} &=&(r_1(p,q),r_2(p,q),r_3(p,q),\ldots)=(1,1,1,\ldots), \\
\sigma^{(u)}&=&(s_0(p,q),s_1(p,q),s_2(p,q), \ldots ) = ([u]_{p,q},p+q,p+q,\ldots), \ \mbox{and} \\
\tau^{(u)}&=&(t_1(p,q),t_2(p,q), t_3(p,q), \ldots ) = (pq[u-1]_{p,q},pq,pq,\ldots),
\end{eqnarray*}
where $u \geq 3$.
In this case, we are considering the polynomials defined by
\begin{eqnarray*}
d^{(u)}_{0,0}(p,q) &=& 1, \\
d^{(u)}_{n+1,0}(p,q) &=& [u]_{p,q}d^{(u)}_{n,0}(p,q)+pq [u-1]_{p,q}d^{(u)}_{n,1}(p,q) \ \mbox{for $n \geq 1$}, \ \mbox{and} \\
d^{(u)}_{n+1,k}(p,q)&=&d^{(u)}_{n,k-1}(p,q)+(p+q)d^{(u)}_{n,k}(p,q)+pq d^{(u)}_{n,k+1}(p,q) \ \mbox{for
$1 \leq k \leq n$}
\end{eqnarray*}
where $d^{(u)}_{n,k}(p,q)=0$ unless $n\ge k\ge 0$.
In this case, the underlying combinatorial objects are Motzkin paths
where the weights of the up-steps are 1, the weights of the down-steps
that ends
a level 0 are $pq[u-1]_{p,q}$ and the weights of the down-steps that end
at level $k > 0$ are $pq$, and the weights of the
level-steps is $[u]_{p,q}$ if the step is at
level 0 and $p+q$ if the step is at level $k \geq 1$. Thus we can interpret $d^{(u)}_{n,k}(p,q)$ as  the sum of the weights
of Motzkin paths that start at $(0,0)$ and end at $(n,k)$.
For example, if $D^{(3)}(p,q) = [e_{n,k}(p,q)]$, then
$$
  D^{(3)}(p,q)=\left[
      \begin{array}{cccccc}
        1 &  &  &  &  &  \\
        p^2 +pq +q^2 & 1 &   &   &   &   \\
        (p^2 +pq +q^2)^2+pq(p+q) & p^2 +pq +q^2+p+q & 1  &   &   \\
        \vdots & \vdots &\vdots &  &  & \ddots \\
      \end{array}
    \right].
$$

In this case,
the sequences do not satisfy the hypothesis of both Lemma \ref{key}
and Lemma \ref{product}.
Nevertheless,
we can prove directly that the tridiagonal matrix
$J^{(u)}:=J^{(\pi,\sg,\tau)}(p,q)$ where
$$J^{(u)}=\begin{bmatrix} [u]_{p,q} & 1 &  &  &   \\
pq[u-1]_{p,q} & p+q & 1 & \\
&  pq & p+q & 1 &  \\
 &   & pq & p+q & 1 &  \\
 &  &&\ddots&\ddots&\ddots
\end{bmatrix}$$
is $p,q$-TP. By Lemma \ref{basic}, we
need only show that principal minors of the form \\
$\mathrm{det}J^{(u)}[\{n,n+1, \ldots, n+k-1\},\{n,n+1, \ldots, n+k-1\}]$ are polynomials
with nonnegative coefficients.

For minors of the form $\mathrm{det}J^{(u)}[\{n,n+1, \ldots, n+k-1\},\{n,n+1, \ldots, n+k-1\}]$
where $n > 1$, we are dealing with the tridiagonal matrix
$$L=\begin{bmatrix} p+q & 1 &  &  &   \\
pq & p+q & 1 & \\
&  pq & p+q & 1 &  \\
 &   & pq & p+q & 1 &  \\
 &  &&\ddots&\ddots&\ddots
\end{bmatrix}$$
which does satisfy the hypothesis of  Lemma \ref{product}
where $b_i(p,q) = p$ and $c_i(p,q) =q$ for all $i \geq 1$.
Thus we need only
consider minors of the form
$N^{(u)}_k(p,q) = \mathrm{det}J^{(u)}[\{1,2, \ldots, k\},\{1,2, \ldots, k\}]$.
We shall
prove by induction that $N^{(u)}_k(p,q) =[k+u-1]_{p,q}$.  Note that $p[n]_{p,q} = p^n+pq[n-1]_{p,q}$ and $q[n]_{p,q} = pq[n-1]_{p,q}+q^n$.
Hence
$N^{(u)}_1 =[u]_{p,q}$ and
$$
N^{(u)}_2 =(p+q)[u]_{p,q} -pq[u-1]_{p,q} = p^u+pq[u-1]_{p,q}+pq[u-1]_{p,q}+q^u -pq[u-1]_{p,q} = [u+1]_{p,q}.
$$

Now assume that $k \geq 3$.
Then by expanding the determinant about the last row of $J^{(u)}[\{1,2, \ldots, k\},\{1,2, \ldots, k\}]$, we see that
\begin{eqnarray*}
N^{(u)}_k &=&(p+q)N_{k-1} -pqN^{(u)}_{k-2} = (p+q)[k-1+u-1]_{p,q} -pq [k-2+u-1]_{p,q} \\
&=& p^{k+u-2} + pq[k+u-3]_{p,q} +pq[k+u-3]_{p,q}+q^{k+u-2} -pq[k+u-3]_{p,q} = [k+u-1]_{p,q}.
\end{eqnarray*}
Thus $J^{(u)}$ is $q$-TP for all $u \geq 3$.

Thus we can apply Theorem \ref{HMJ} to obtain the following result.

\begin{proposition} For all $ u\geq 3$, $(d^{(u)}_{n,0}(p,q))_{n \geq 0}$
is a Stieltjes moment sequence of polynomials.
\end{proposition}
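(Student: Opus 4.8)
The plan is to deduce the statement from Theorem~\ref{HMJ}. That theorem asserts that if $r_k(p,q)=1$ for all $k$ and the tridiagonal coefficient matrix $J^{(u)}$ is $p,q$-TP, then the first column $(d^{(u)}_{n,0}(p,q))_{n\ge 0}$ of the recursive matrix defined by \eqref{rec:main} is a Stieltjes moment sequence of polynomials. Since $\pi^{(u)}=(1,1,1,\dots)$, the first hypothesis is immediate, so the whole task reduces to checking that $J^{(u)}$ is $p,q$-TP. Note that neither Lemma~\ref{key} nor Lemma~\ref{product} applies directly here, because of the exceptional corner entries $s_0=[u]_{p,q}$ and $t_1=pq[u-1]_{p,q}$; a hands-on verification is therefore required.

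To prove $J^{(u)}$ is $p,q$-TP I would use Lemma~\ref{basic}, which reduces the problem to showing that every consecutive principal minor of $J^{(u)}$ has nonnegative coefficients. These split into two families. First, a consecutive principal minor indexed by $\{n,n+1,\dots,n+k-1\}$ with $n>1$ is a minor of the homogeneous tridiagonal matrix $L$ having $p+q$ on the diagonal, $1$ on the superdiagonal and $pq$ on the subdiagonal; this is precisely the matrix to which Lemma~\ref{product} applies with $b_i(p,q)=p$ and $c_i(p,q)=q$, so all such minors are $p,q$-nonnegative. Second, one is left with the leading principal minors $N^{(u)}_k(p,q)=\det J^{(u)}[\{1,\dots,k\},\{1,\dots,k\}]$.

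The crux is the explicit formula $N^{(u)}_k(p,q)=[k+u-1]_{p,q}$, proved by induction on $k$. The cases $k=1,2$ follow by direct computation, using $p[m]_{p,q}=p^m+pq[m-1]_{p,q}$ and $q[m]_{p,q}=pq[m-1]_{p,q}+q^m$ to simplify $N^{(u)}_2=(p+q)[u]_{p,q}-pq[u-1]_{p,q}=[u+1]_{p,q}$. For $k\ge 3$, expanding the determinant along the last row gives the three-term recurrence $N^{(u)}_k=(p+q)N^{(u)}_{k-1}-pq\,N^{(u)}_{k-2}$; applying the inductive hypothesis and the same two identities yields $(p+q)[k+u-2]_{p,q}-pq[k+u-3]_{p,q}=[k+u-1]_{p,q}$. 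Since every $p,q$-integer has nonnegative coefficients, all leading principal minors of $J^{(u)}$ are $p,q$-nonnegative, so Lemma~\ref{basic} gives that $J^{(u)}$ is $p,q$-TP, and Theorem~\ref{HMJ} then yields the proposition.

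The only genuine difficulty is guessing the closed form $[k+u-1]_{p,q}$ for the leading principal minors; everything afterward is a mechanical check of $p,q$-integer identities. The hypothesis $u\ge 3$ is used only to ensure that the corner entries $[u]_{p,q}$ and $pq[u-1]_{p,q}$ are genuine nonnegative polynomials and that the base of the induction is well defined.
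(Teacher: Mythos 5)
Your proposal is correct and follows essentially the same route as the paper: reduce to consecutive principal minors via Lemma~\ref{basic}, handle the minors not involving the first row by Lemma~\ref{product} with $b_i=p$, $c_i=q$, and establish the closed form $N^{(u)}_k=[k+u-1]_{p,q}$ by the same inductive computation before invoking Theorem~\ref{HMJ}.
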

Taking $a=[u]_{p,q}$, $b=pq[u-1]_{p,q}$, $c=p+q$ and $e=pq$ in \eqref{J-pqst},
we obtain the generating function of the $(d^{(u)}_{n,0})$ is
\begin{multline*}
d_{D,u}(x,p,q)= \sum_{n \geq 0} d^{(u)}_{n,0} x^n = \\
\frac{2}{2-[u-1]_{p,q}+([u-1]_{p,q}(p+q)-2[u]_{p,q})x+[u-1]_{p,q}\sqrt{1-2(p+q)x+(p-q)^2x^2}}.
\end{multline*}

\begin{rem}
Setting $p=q=1$ in $(d^{(3)}_{n,0}(p,q))_{n \geq 0}$ gives the sequence
$1,3,11,43,173,707,2917,\ldots $ which is sequence
\cite[A026671]{OEIS}.  The combinatorial interpretation
for the sequence is the number of lattice paths from $(0,0)$ to $(n,n)$
using steps $(1,0)$, $(0,1)$, and $(1,1)$ when the step is on the diagonal.
\end{rem}\ \\
{\bf Example 3.5.} Let
\begin{eqnarray*}
\pi &=&(r_1(p,q,r),r_2(p,q,r),r_3(p,q,r), \ldots ) = (1,1,1,\ldots), \\
\sigma &=&(s_0(p,q,r),s_1(p,q,r), s_2(p,q,r),\ldots )=
(q+r,p+q+r,p+q+r,\ldots ), \ \mbox{and} \\
\tau&=&(t_1(p,q,r),t_2(p,q,r),t_3(p,q,r), \ldots ) = (q(p+r),q(p+r),q(p+r),\ldots).
\end{eqnarray*}
In this case, we are considering
the polynomials defined by
\begin{eqnarray*}
i_{0,0}(p,q,r) &=& 1,\\
i_{n+1,0}(p,q,r) &=& (q+r)i_{n,0}(p,q,r)+q(p+r) i_{n,1}(p,q,r) \ \mbox{for $n \geq 1$}, \mbox{and} \\
i_{n+1,k}(p,q,r)&=&i_{n,k-1}(p,q,r)+(p+q+r)i_{n,k}(p,q,r)+q(p+r) i_{n,k+1}(p,q,r) \ \mbox{for
$1 \leq k \leq n$}
\end{eqnarray*}
where $i_{n,k}(p,q,r)=0$ unless $n\ge k\ge 0$.
In this case, the underlying combinatorial objects are Motzkin paths
where the weights of the up-steps are 1, the weights of the down-steps are
$q(p+r)$, and the weights of the
level steps at level $0$ is $q+r$ and the weights of
the level steps at level $k \geq 1$ are $p+q+r$. Thus we can interpret $i_{n,k}(p,q,r)$ as  the sum of the weights
of Motzkin paths that start at $(0,0)$ and end at $(n,k)$.
For example, if $I(p,q,r) = [i_{n,k}(q)]$, then
\begin{multline*}\
  I(p,q,r)=\\
\left[
      \begin{array}{cccccc}
        1 &  &  &  &  &  \\
        q+r  & 1 &   &   &   &   \\
        (pq+q^2)+3qr+r^2& (p+2q)+2r & 1 &   &   &   \\
        \left(p^2q+3pq^2+q^3)+\right. & (p^2+5pq+3q^2)+(3p+8q)r+3r^2& (2p+3q) +3r & 1 &   &  \\
        \left.(4pq+6q^2)r +6qr^2+r^3 \right. & & & & & \\
        \vdots & \vdots &\vdots &\vdots  &  & \ddots \\
        \end{array}
    \right].
\end{multline*}

In this case,
the sequences do not satisfy the hypothesis of both Lemma \ref{key} and Lemma \ref{product}.
However,
we can prove directly that the tridiagonal matrix
$J:=J^{(\pi,\sg,\tau)}$ where
$$J=\begin{bmatrix}  q+r & 1 &  \\
 q(p+r) & p+q+r & 1 &  \\
  &  q(p+r) & p+q+r& 1 &  \\
  &  &  q(p+r) & p+q+r & 1 &  \\
  &&&\ddots&\ddots&\ddots
\end{bmatrix}$$
is $(p,q,r)$-TP. By Lemma \ref{basic}, we
need only show that principal minors of the form
$\mathrm{det}J[\{n,n+1, \ldots, n+k-1\},\{n,n+1, \ldots, n+k-1\}]$ are polynomials
with nonnegative coefficients.

For consecutive minors of the form $\mathrm{det}J[\{n,n+1, \ldots, n+k-1\},\{n,n+1, \ldots, n+k-1\}]$  where $n \geq 2$, are considering the matrix
$$\overline{J}=\begin{bmatrix}  p+r+q & 1 &  \\
 q(p+r) & p+r+q & 1 &  \\
  &  q(p+r) & p+r+q& 1 &  \\
  &  &  q(p+r) & p+r+q & 1 &  \\
  &&&\ddots&\ddots&\ddots
\end{bmatrix}.$$
This matrix satisfies the hypothesis of Lemma \ref{product} where
$(b_1(p,q,r),b_2(p,q,r),b_3(p,q,r), \ldots ) = (p+r,p+r,p+r, \ldots)$ and
$(c_1(p,q,r),c_2(p,q,r),c_3(p,q,r),\ldots ) = (q,q,q \ldots)$.

Thus we need only consider minors of the form
$P_k = \mathrm{det}J[\{1,2, \ldots, k\},\{1,2, \ldots, k\}]$. We shall
prove by induction that $P_k = q^n + \sum_{k=1}^n r^k \left( \sum_{j=0}^k \binom{n-k-j+k-1}{k-1} q^j p^{n-k-j}\right).$
Clearly
$P_1 =q+r$ and $P_2 =(q+r)(p+q+r) -q(p+r) = q^2 +(p+q)r+r^2$. Now assume that $k \geq 3$.
Then by expanding the determinant about the last row of $J[\{1,2, \ldots, k\},\{1,2, \ldots, k\}]$, we see that
$$P_k =(p+q+r)P_{k-1} -q(p+r)P_{k-2}.$$

In particular,
$$P_k|_{r^0} =(p+q)P_{k-1}|_{r^0} -qpP_{k-2}|_{r^0} =(p+q)q^{k-1}-qpq^{k-2} = q^k.$$
Similarly,
$$P_k|_{r^n} =r*P_{k-1}|_{r^{n}} -q(p+r)P_{k-2}|_{r^n} =r*r^{n-1} -0  = r^k.$$

Now suppose that $1 \leq r \leq n-1$.
Then we must show that
\begin{equation}\label{ind}
P_k|_{r^kq^j}  =(p+q)P_{k-1}|_{r^kq^j} + P_{k-1}|_{r^{k-1}q^j}-pqP_{k-2}|_{r^kq^j} - qP_{k-2}|_{r^{k-1}q^j}.
\end{equation}
One can show that this yields a identity among binomial coefficients which can be directly checked with Mathematica.
Thus $J$ is $p,q,r$-TP.

Thus we can apply Theorem \ref{HMJ} to obtain the following result.

\begin{proposition}
The sequence $(i_{n,0}(p,q,r))_{n \geq 0}$
is a
Stieltjes moment sequence of polynomials.
\end{proposition}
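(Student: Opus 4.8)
The plan is to derive the proposition from Theorem~\ref{HMJ}. Since $r_k(p,q,r)=1$ for every $k$, that theorem reduces the claim to showing that the coefficient matrix $J=J^{(\pi,\sigma,\tau)}$ displayed above is $(p,q,r)$-totally positive: once $J$ is $(p,q,r)$-$TP$, the factorization $H(p,q,r)=M(p,q,r)\,T(p,q,r)\,M(p,q,r)^{t}$ of the Hankel matrix of $(i_{n,0}(p,q,r))_{n\ge 0}$, together with the Cauchy--Binet theorem, forces every minor of that Hankel matrix to be a polynomial in $p,q,r$ with nonnegative coefficients, which is precisely the assertion to be proved.

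To establish that $J$ is $(p,q,r)$-$TP$ I would avoid expanding minors one at a time and instead recognize $J$ directly as a matrix of the form treated in Lemma~\ref{product}. Set
$$(b_1,b_2,b_3,\dots)=(r,\;p+r,\;p+r,\;\dots)\qquad\text{and}\qquad(c_1,c_2,c_3,\dots)=(q,\;q,\;q,\;\dots),$$
two sequences of polynomials in $p,q,r$ with nonnegative coefficients. Then $b_1+c_1=q+r$, $b_k+c_k=p+q+r$ for all $k\ge 2$, and $b_{k+1}c_k=q(p+r)$ for all $k\ge 1$, so $J=J^{b,c}$ in the notation of Lemma~\ref{product}; hence $J$ is $(p,q,r)$-$TP$ and the proposition follows. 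This shortcut also makes the explicit evaluation of the corner minors $P_k$ carried out above unnecessary.

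Should one prefer the direct route instead --- reducing the consecutive principal minors that avoid the top-left corner to the Toeplitz matrix $\overline{J}$, which has the form in Lemma~\ref{product} with $b_i=p+r$ and $c_i=q$, and expanding the remaining minors $\mathrm{det}J[\{1,\dots,k\},\{1,\dots,k\}]$ along the last row to get $P_k=(p+q+r)P_{k-1}-q(p+r)P_{k-2}$ --- then the only genuinely delicate step is controlling the sign: this three-term recursion subtracts a nonzero quantity, so nonnegativity of the coefficients of $P_k$ is not automatic, and one must either establish the closed form for $P_k$ and check it against a binomial-coefficient identity, or regroup the recursion so that the subtraction telescopes. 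The $(b,c)$-decomposition of the previous paragraph sidesteps this completely, which is the approach I would take.
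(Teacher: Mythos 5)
Your proposal is correct, and it takes a genuinely different route from the paper for the key step. The reduction to showing that $J$ is $(p,q,r)$-TP via Theorem \ref{HMJ} is the same in both arguments. But for the total positivity of $J$ itself, the paper explicitly asserts that the sequences \emph{do not} satisfy the hypothesis of Lemma \ref{product} and therefore proceeds by hand: it disposes of the consecutive principal minors avoiding the top-left corner by reducing to the Toeplitz matrix $\overline{J}$ (which it \emph{does} fit into Lemma \ref{product} with $b_i=p+r$, $c_i=q$), and then computes the corner minors $P_k=\mathrm{det}J[\{1,\dots,k\},\{1,\dots,k\}]$ by induction on the recursion $P_k=(p+q+r)P_{k-1}-q(p+r)P_{k-2}$, extracting coefficients of powers of $r$ and finishing with a binomial-coefficient identity ``directly checked with Mathematica.'' Your observation that the non-constant first term can be absorbed by taking $b_1=r$, $b_k=p+r$ for $k\ge 2$, and $c_k=q$ for all $k$ is valid: one checks $b_1+c_1=q+r$, $b_k+c_k=p+q+r$ for $k\ge 2$, and $b_{k+1}c_k=q(p+r)$ for $k\ge 1$, so $J=J^{b,c}$ exactly and Lemma \ref{product} applies to the whole matrix. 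This shows the paper's claim about Lemma \ref{product} is in fact mistaken, and your argument is shorter, avoids the sign-control issue in the three-term recursion that you correctly flag as the delicate point of the direct route, and eliminates the computer-verified identity; the paper's computation does buy the explicit closed form for the corner minors $P_k$, but that is not needed for the proposition.
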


On the other hand, taking $a=q+r$, $b=q(p+r)$, $c=p+q+r$ and $e=q(p+r)$ in \eqref{J-pqst},
we obtain the generating function of the first column $I(p,q,r)$ is
$$d(x,p,q,r)=\frac{2}{1+(p-q-r)x+\sqrt{1-2(p+q+r)x+((p+q+r)^2-4q(p+r))x^2}}.$$

Here is a list of the first few values of the sequence $(i_{n,0}(p,q,r))_{n \geq 0}$.

$1$\\
$q+r$\\
$(pq+q^2)+3 q r+r^2$\\
$(p^2 q+3 p q^2+q^3)+\left(4 p q+6 q^2\right) r+6 q r^2+r^3$\\
$(p^3 q+6 p^2 q^2+6 p q^3+q^4)+\left(5 p^2 q+20 pq^2+10 q^3\right) r+\left(10 p q+20 q^2\right) r^2+10 q r^3+r^4$

\begin{rem}
\begin{enumerate}
\item It follows from earlier results that $i_{n,0}(p,q,0) = \sum_{k=1}^{n} \frac{1}{k} \binom{n-1}{k-1} \binom{n}{k-1} q^kp^{n-k}$ from which it follows
that $i_{n,0}(1,1,0) = C_n$ where $C_n = \frac{1}{n+1}\binom{2n}{n}$ is the
$n$-th Catalan number.

\item We can show that $i_{n,0}(p,q,1) = \sum_{k=1}^{n} \frac{1}{k+1} \binom{n+k}{k} \binom{n}{k} q^kp^{n-k}$ from which it follows that
$(i_{n,0}(1,1,1))_{n \geq 0}$ is sequence \cite[A006318]{OEIS} which is the sequence of large Sch\"{o}oder numbers.

\item We can show that  $(i_{n,0}(1,1,r))_{n \geq 0}$ is the triangle  \cite[A060693]{OEIS}.

\item The sequence $(i_{n,0}(1,1,2))_{n \geq 0}$ starts out $1,3,12,57,300,1686,9912, \ldots$. This is sequence \cite[A047891]{OEIS}.

\item The sequence $(i_{n,0}(1,2,1))_{n \geq 0}$ starts out $1,3,13,67,381,2307,14598, \ldots$. This is sequence \cite[A064062]{OEIS} which is
the sequence of generalized Catalan numbers.
\end{enumerate}

\end{rem}

Next, in Examples 3.6-3.9, we will consider several
sequences of polynomials in $q$
that were studied by Zhu in \cite{Zhu13}.  In each case, Zhu
showed that the sequence of polynomials is strongly $q$-log convex sequence.
Chen et al. \cite{Chen10} proved that Narayana polynomials form
a strongly $q$-log convex sequence.
Chen et al. \cite{Chen11} also proved that Bell polynomials form
a strongly $q$-log convex sequence.
Wang and Zhu \cite{WZ16} further studied the polynomials of Zhu and
showed that each sequence has the stronger property of being
Stieltjes moment sequences of polynomials. Our results will
show that there are natural
$(p,q)$-analogues of these polynomials which
are Stieltjes moment sequences of polynomials. \\
\ \\
{\bf Example 3.6.}
Wang and Zhu \cite{WZ16} proved that the sequence of Narayana polynomials
$(W_n(q))_{n \geq 0}$ of type $B$ is a Stieltjes moment sequence of
 where $W_n(q) = \sum_{k=0}^n \binom{n}{k}^2q^k$.
It follows from Theorem \ref{H} that the sequence $(W_n(p,q))_{n \geq 0}$
where $W_n(p,q) = \sum_{k=0}^n \binom{n}{k}^2q^kp^{n-k}$
is a Stieltjes moment sequence of polynomials.

\begin{proposition}
The sequence $\left(\sum_{k=0}^{n} \binom{n}{k}^2 p^kq^{n-k}\right)_{n \geq 0}$
is a Stieltjes moment sequence of polynomials.
\end{proposition}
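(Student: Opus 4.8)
The plan is to obtain this statement as an immediate consequence of Theorem~\ref{H}, using as the only external ingredient the theorem of Wang and Zhu~\cite{WZ16} that the type $B$ Narayana polynomials $W_n(q)=\sum_{k=0}^n\binom{n}{k}^2 q^k$ form a Stieltjes moment sequence of polynomials. The first step is to record the degree of $W_n(q)$: since the coefficient of $q^n$ in $W_n(q)$ is $\binom{n}{n}^2=1$ and no higher power of $q$ appears, we have $\mathrm{deg}(W_n(q))=n$ for every $n\ge 0$. Hence $(W_n(q))_{n\ge 0}$ satisfies the hypotheses of Theorem~\ref{H}.

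Next I would apply Theorem~\ref{H} to conclude that $(H_{x_0}(W_n(q)))_{n\ge 0}$ is a Stieltjes moment sequence of polynomials in the variables $x_0$ and $q$. The only remaining task is the bookkeeping one of evaluating this homogenization:
$$H_{x_0}(W_n(q))=x_0^n\,W_n\!\left(\frac{q}{x_0}\right)=x_0^n\sum_{k=0}^n\binom{n}{k}^2\left(\frac{q}{x_0}\right)^k=\sum_{k=0}^n\binom{n}{k}^2 q^k x_0^{\,n-k}.$$
Renaming $x_0$ as $p$ and reindexing via $j=n-k$, using $\binom{n}{k}^2=\binom{n}{n-k}^2$, turns the right-hand side into $\sum_{j=0}^n\binom{n}{j}^2 p^{\,j} q^{\,n-j}$, which is precisely the $n$-th term of the sequence in the statement. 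This completes the argument.

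I do not expect any genuine obstacle: the two things to be checked are the degree count, which is immediate from the leading binomial coefficient, and an elementary change of summation index, while all of the total-positivity content has already been isolated in Theorem~\ref{H} and in the cited Wang--Zhu result. (If one wanted to avoid quoting Wang and Zhu one could instead try to realize $W_n(q)$ via the recursion~\eqref{rec:main} with $r_k(q)=1$, $s_k(q)=1+q$, $t_1(q)=2q$, and $t_k(q)=q$ for $k>1$; but the associated tridiagonal coefficient matrix fails condition~(iii) of Lemma~\ref{key} at $i=0$ and does not fit the decomposition required by Lemma~\ref{product} with nonnegative $b_i,c_i$, so that route would require a direct minor computation in the spirit of Examples 3.4 and 3.5. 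The homogenization argument via Theorem~\ref{H} is therefore the path I would follow.)
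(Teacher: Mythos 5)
Your proposal is correct and follows essentially the same route as the paper: Example 3.6 likewise quotes the Wang--Zhu result that $(W_n(q))_{n\ge 0}$ is a Stieltjes moment sequence of polynomials and then invokes Theorem~\ref{H} to homogenize, obtaining $\sum_{k=0}^n\binom{n}{k}^2q^kp^{n-k}$. Your write-up simply makes explicit the degree check $\mathrm{deg}(W_n(q))=n$ and the symmetry $\binom{n}{k}=\binom{n}{n-k}$ needed to match the stated form, both of which the paper leaves implicit.
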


Taking $a=p+q$, $b=2pq$, $c=p+q$ and $e=pq$ in \eqref{J-pqst},
we obtain the generating function of the $e_{n,0}(p,q)$s is
$$d(x,p,q)=
\frac{1}{\sqrt{1-2(p+q)x+(p-q)^2x^2}}.$$

\begin{rem}
\begin{enumerate}
\item When we set $p=q =1$ in $(e_{n,0}(p,q))_{n \geq 0}$, we obtain the
$1,2,6,20,70,252, \ldots $ which is sequence  \cite[A001850]{OEIS}
which are central binomial coefficients $\binom{2n}{n}$.  Thus
we can view $(e_{n,0}(p,q))_{n \geq 0}$ as a $(p,q)$-analogue of
the central binomial coefficients.

\item When we set $p=2$ and $q =1$ in $(e_{n,0}(p,q))_{n \geq 0}$, we obtain the
$1,3,13,63,321,1683, \ldots $ which is sequence  \cite[A000984]{OEIS}
which are central Delannoy numbers.  Thus
we can view $(e_{n,0}(2p,q))_{n \geq 0}$ as a $(p,q)$-analogue of
the central Delannoy numbers.

\item When we set $p=2$ and $q =2$ in $(e_{n,0}(p,q))_{n \geq 0}$, we obtain the
$1,4,24,160,1120,8064, \ldots $ which is sequence \cite[A059304]{OEIS}.
This $n$-th term of this sequence also counts the number
of paths from (0,0) to $(n,n)$ using steps (0,1)
and two kinds of steps (1,0).

\item When we set $p=2$ and $q =4$ in $(e_{n,0}(p,q))_{n \geq 0}$, we obtain the
$1,6,52,504,5136,\ldots $ which is sequence  \cite[A084773]{OEIS}.
This sequence
has a interpretation in terms of weighted Motzkin paths with a different
set of weights than the ones that come out of our interpretation. Thus
we can view $(e_{n,0}(2p,4q))_{n \geq 0}$ as a $(p,q)$-analogue of
this sequence.

\end{enumerate}
\end{rem}\ \\
{\bf Example 3.7.}
Wang and Zhu \cite{WZ16} observed that the sequence $(\overline{N}_{n}(q))_{n \geq 0}$ is a
Stieltjes moment sequence of polynomials where
$$\overline{N}_n(q) = \sum_{k=1}^n \frac{1}{n}\binom{n}{k-1}\binom{n}{k} q^k.$$
It follows from Theorem \ref{H} that the sequence $(\overline{N}_{n}(p,q))_{n \geq 0}$ is a
Stieltjes moment sequence of polynomials where
$$\overline{N}_n(p,q) = \sum_{k=1}^n \frac{1}{n}\binom{n}{k-1}\binom{n}{k} q^kp^{n-k}.$$

\begin{proposition} The sequence
$\left(\sum_{k=1}^n \frac{1}{n}\binom{n}{k-1}\binom{n}{k} p^{n-k}q^k\right)_{n \geq 0}$
is a Stieltjes moment sequence of polynomials.
\end{proposition}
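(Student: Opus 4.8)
The plan is to deduce this proposition from the one-variable theorem of Wang and Zhu together with the homogenization principle Theorem~\ref{H}, exactly as signalled in the sentence preceding the statement. Write $\overline{N}_n(q) = \sum_{k=1}^n \tfrac1n \binom{n}{k-1}\binom{n}{k} q^k$ for $n \ge 1$ and $\overline{N}_0(q) = 1$. Wang and Zhu proved that $(\overline{N}_n(q))_{n\ge 0}$ is a Stieltjes moment sequence of polynomials in the single variable $q$; this is the input we will feed into Theorem~\ref{H}.

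First I would verify the degree hypothesis of Theorem~\ref{H}, namely that $\deg(\overline{N}_n(q)) = n$ for all $n \ge 0$. For $n \ge 1$ the term of highest $q$-degree comes from $k = n$ and equals $\tfrac1n\binom{n}{n-1}\binom{n}{n}q^n = q^n$, which is nonzero, so $\deg(\overline{N}_n(q)) = n$; and $\deg(\overline{N}_0(q)) = \deg(1) = 0$. Thus the hypothesis holds. Next I would compute the homogenization explicitly: taking ${\bf x} = (q)$ and the homogenizing variable $x_0$, we have $H_{x_0}(\overline{N}_n(q)) = x_0^n\,\overline{N}_n(q/x_0) = \sum_{k=1}^n \tfrac1n \binom{n}{k-1}\binom{n}{k} x_0^{\,n-k} q^k$. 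Renaming $x_0$ as $p$, this is precisely $\overline{N}_n(p,q) = \sum_{k=1}^n \tfrac1n \binom{n}{k-1}\binom{n}{k} p^{\,n-k} q^k$, the polynomial in the statement. Applying Theorem~\ref{H} to $(\overline{N}_n(q))_{n\ge 0}$ then gives that $(\overline{N}_n(p,q))_{n\ge 0}$ is a Stieltjes moment sequence of polynomials, which is the assertion.

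There is essentially no serious obstacle here; the content resides in the cited one-variable result and in Theorem~\ref{H}. The only point demanding care is the degree condition, in particular confirming that the leading coefficient of $\overline{N}_n(q)$ is genuinely nonzero so that the degree equals $n$ rather than merely being $\le n$ (and handling $n = 0$ separately), since Theorem~\ref{H} is false without this. If one preferred a proof that does not cite Wang--Zhu, one could instead observe that $(\overline{N}_n(p,q))_{n\ge 0}$ is the first column of the recursive matrix $M^{\pi,\sg,\tau}(p,q)$ with $r_k(p,q) = 1$, $s_0(p,q) = q$, $s_k(p,q) = p+q$ for $k \ge 1$, and $t_k(p,q) = pq$ for $k \ge 1$; the associated tridiagonal matrix $J$ is then $(p,q)$-TP by Lemma~\ref{product} with $b_1(p,q) = 0$, $b_k(p,q) = p$ for $k \ge 2$, and $c_k(p,q) = q$ for all $k$, after which Theorem~\ref{HMJ} gives the conclusion. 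The homogenization route above is the shorter of the two.
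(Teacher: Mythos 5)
Your proposal is correct and follows essentially the same route as the paper: cite the Wang--Zhu result that $(\overline{N}_n(q))_{n\ge 0}$ is a Stieltjes moment sequence of polynomials in $q$ and then homogenize via Theorem~\ref{H}. Your explicit verification of the degree hypothesis $\deg(\overline{N}_n(q))=n$ (leading term $\tfrac1n\binom{n}{n-1}\binom{n}{n}q^n=q^n$) is a detail the paper leaves implicit but is a welcome addition.
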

Taking $a=q$, $b=pq$, $c=p+q$ and $e=pq$ in \eqref{J-pqst},
we obtain the generating function of the $(f_{n,0}(q))$ is
$$d_F(x,p,q)=\sum_{n \geq 0} f_{n,0}(p,q) x^n =  \frac{2}{1+(p-q)x+\sqrt{1-2(p+q)x+(p-q)^2x^2}}.$$

\begin{rem}
\begin{enumerate}
\item If we set $p=q=1$ in the sequence of $(f_{n,0}(p,q))_{n \geq 0}$, we
obtain the sequence of Catalan numbers $C_0,C_1,C_2, \ldots $.
Thus we can view $(f_{n,0}(p,q))_{n \geq 0}$ as a $(p,q)$-analogue of
the Catalan numbers.

 \item If we set $p=2$ and $q=1$ in the sequence of $(f_{n,0}(p,q))_{n \geq 0}$, we
obtain the sequence $1,1,3,11,45,19,903,4279, \ldots $ which is
the sequence of little Schr\"oder numbers \cite[A001003]{OEIS}.
Thus we can view $(f_{n,0}(2p,q))_{n \geq 0}$ as a $(p,q)$-analogue of
the little Schr\"oder numbers.

 \item If we set $p=1$ and $q=2$ in the sequence of $(f_{n,0}(p,q))_{n \geq 0}$, we
obtain the sequence $1,2,6,22,90,394,1806, \ldots $ which is
the sequence of large Schr\"oder numbers \cite[A0006318]{OEIS}.
Thus we can view $(f_{n,0}(p,2q))_{n \geq 0}$ as a $(p,q)$-analogue of
the large Schr\"oder numbers.

\item If we set $p=2$ and $q=2$ in the sequence of $(f_{n,0}(p,q))_{n \geq 0}$, we
obtain the sequence $1,2,8,40,224,1344,8448,54912,  \ldots $ which is
the sequence  \cite[A151374]{OEIS}. This sequence counts the number
of paths that start at (0,0) and stay in the first quadrant consisting of
$2n$ steps $(1,1)$, $(-1,-1)$, and $(-1,0)$.
\end{enumerate}
\end{rem}
It follows that for any $a,b \geq 0$,
the sequence $(f_{n,0}(a,b))_{n \geq 0}$ is
a Stieltjes moment sequence. In particular, the
Catalan numbers $C_n$, the little Schr\"oder numbers $S_n$, and the large Schr\"oder numbers $r_n$ are all Stieltjes moment sequences.\\
\ \\
{\bf Example 3.8.}
Wang and Zhu \cite{WZ16} proved that
$(S_n(q))_{n \geq  0}$ is a Stieltjes moment sequence of polynomials where
$S_n(q) = \sum_{k=1}^n S_{n,k} q^k$
and $S_{n,k}$ is Stirling number of the second kind which
counts the number of set partitions of $\{1,2, \ldots, n\}$ into
$k$ parts.  It follows form
Theorem \ref{H} that $(S_n(p,q))_{n \geq  0}$ is a Stieltjes moment sequence of polynomials where
$S_n(p,q) = \sum_{k=1}^n S_{n,k} q^kp^{n-k}$.

\begin{proposition} The sequence
$\left(\sum_{k=1}^n S_{n,k} p^{n-k}q^k\right)_{n \geq 0}$ is a
Stieltjes moment sequence of polynomials.
\end{proposition}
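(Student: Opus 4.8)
The plan is to obtain this proposition as a direct consequence of Theorem~\ref{H}, exactly as in Examples 3.6 and 3.7: the $(p,q)$-Bell polynomial $\sum_{k=1}^n S_{n,k}p^{n-k}q^k$ is nothing but the homogenization $H_p(S_n(q))$ of the ordinary $q$-Bell polynomial $S_n(q) = \sum_{k=1}^n S_{n,k}q^k$.

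First I would record that $(S_n(q))_{n\ge 0}$ is itself a Stieltjes moment sequence of polynomials. This was proved by Wang and Zhu~\cite{WZ16}; alternatively, one can recover it within the present setup, since the $q$-Bell polynomials are the Catalan-like numbers attached to $r_k(q)=1$, $s_k(q)=k+q$, $t_k(q)=kq$, and the associated tridiagonal matrix $J(q)$ satisfies the hypothesis of Lemma~\ref{product} with $b_i(q)=i-1$ and $c_i(q)=q$ (one checks $s_n = b_{n+1}+c_{n+1}$ and $t_n = c_nb_{n+1}$). Hence $J(q)$ is $q$-TP, and Theorem~\ref{HMJ} applies. Next I would verify the degree condition needed for Theorem~\ref{H}: since $S_{n,k}\ge 0$ for all $k$ and $S_{n,n}=1$, the polynomial $S_n(q)$ is $q$-nonnegative and $\deg(S_n(q))=n$ for every $n\ge 0$.

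With these two facts in hand, Theorem~\ref{H}, applied with the fresh homogenizing variable $p$ in place of $x_0$, shows that $\bigl(H_p(S_n(q))\bigr)_{n\ge 0}$ is a Stieltjes moment sequence of polynomials in the variables $p$ and $q$. Finally I would unwind the definition of $H_p$:
$$H_p(S_n(q)) = p^n\,S_n\!\Bigl(\frac{q}{p}\Bigr) = \sum_{k=1}^n S_{n,k}\,p^{n-k}q^k,$$
which is precisely the $n$-th term of the sequence in the statement, completing the proof. I do not expect any real obstacle here: the only point requiring care is the \emph{exact} (not merely upper) degree bound $\deg(S_n(q))=n$, which is immediate from $S_{n,n}=1$, while all of the total-positivity work has already been carried out in Lemma~\ref{product}, Theorem~\ref{HMJ}, and Theorem~\ref{H}.
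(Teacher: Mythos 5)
Your proposal is correct and follows essentially the same route as the paper: the paper likewise cites Wang and Zhu \cite{WZ16} for the fact that $(S_n(q))_{n\ge 0}$ is a Stieltjes moment sequence of polynomials and then applies Theorem \ref{H} to its homogenization. Your extra details (the self-contained verification via Lemma \ref{product} with $b_i(q)=i-1$, $c_i(q)=q$, and the explicit check that $\deg(S_n(q))=n$) are accurate and merely fill in steps the paper leaves implicit.
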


It is straightforward to obtain the generating functions of the
$g_{n,0}(p,q)$s.  That is,
Let $\mathbb{S}_k(x) = \sum_{n \geq k} S(n,k) x^n$. Then
it is well-known that $\mathbb{S}_0(x) =1$ and for $k \geq 1$,
$$\mathbb{S}_k(x) = \frac{x^k}{(1-x)(1-2x) \cdots (1-kx)}.$$
It follows that
$$\sum_{n \geq 0} x^n \sum_{k =0}^n S(n,k)q^k =
\sum_{k \geq 0} q^k \mathbb{S}_k(x) = 1 + \sum_{k \geq 1}
\frac{q^k x^k}{(1-x)(1-2x) \cdots (1-kx)}.$$
Replacing $x$ by $xp$ and $q$ by $q/p$ in the generating function
above gives
$$\sum_{n\geq 0}S_n(p,q)x^n=1+ \sum_{k\geq 1}\frac{q^kx^k}{(1-px)(1-2px)\cdots(1-kpx)}.$$

\begin{rem}
\begin{enumerate}
\item If we set $p=q=1$ in the sequence of $(g_{n,0}(p,q))_{n \geq 0}$, we
obtain the sequence of Bell numbers $B_0,B_1,B_2, \ldots $.
If we set $p=q=k$ in the sequence of $(g_{n,0}(p,q))_{n \geq 0}$, we
obtain the sequence $(k^nB_n)_{n \geq 0}$. In particular for all $k \geq 1$,
the sequence $(k^nB_n)_{n \geq 0}$ is a Steiltjes moment sequence.
Thus we can view $(g_{n,0}(p,q))_{n \geq 0}$ as a $p,q$-analogue of
the Bell numbers.

 \item If we set $p=2$ and $q=1$ in the sequence of $(g_{n,0}(p,q))_{n \geq 0}$, we
obtain the sequence $1,1,3,11,49,257,1539,  \ldots $ which is the sequence
\cite[A004211]{OEIS}.

 \item If we set $p=1$ and $q=2$ in the sequence of $(g_{n,0}(p,q))_{n \geq 0}$, we
obtain the sequence $1,2,6,22,94,454,2430, \ldots $ which is
the sequence \cite[A001861]{OEIS}.

\item If we set $p=2$ and $q=2$ in the sequence of $(g_{n,0}(p,q))_{n \geq 0}$, we
obtain the sequence $1,2,8,40,224,1344,8448,54912,  \ldots $ which is
the sequence  \cite[A055882]{OEIS}.
\end{enumerate}
\end{rem}
\ \\
{\bf Example 3.9.}
For any $\sg = \sg_1 \cdots \sg_n \in S_n$, let
$\mathrm{des}(\sg) = |\{i:\sg_i > \sg_{i+1}\}|$ and
$\mathrm{ris}(\sg) = |\{i:\sg_i < \sg_{i+1}\}|$.
Wang and Zhu \cite{WZ16} proved that $(E_{n+1}(q))_{n \geq 0}$ is a Stieltjes moment sequence
of polynomials where for $n \geq 1$,
$$E_n(q) = \sum_{k=0}^{n-1} E_{n,k}q^k = \sum_{ \sg \in S_n} q^{\mathrm{des}(\sg)}.$$
 It follows from Theorem \ref{H} that
$(E_{n+1}(p,q))_{n \geq 0}$ is a Stieltjes moment sequence
of polynomials where
$$E_n(p,q) = \sum_{k=0}^{n-1} E_{n,k}q^kp^{n-k} = \sum_{ \sg \in S_n} q^{\mathrm{des}(\sg)}p^{\mathrm{ris}(\sg)}.$$

\begin{proposition}
The sequence
$\left(\sum_{\sg \in S_n} p^{\mathrm{ris}(\sg)} q^{\mathrm{des}(\sg)}\right)_{n \geq 1}$ is a
Stieltjes moment sequence of polynomials.
\end{proposition}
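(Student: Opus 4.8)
The plan is to deduce this from Theorem \ref{H} together with the result of Wang and Zhu \cite{WZ16} that the single-variable sequence $(E_{n+1}(q))_{n\ge 0}$ is a Stieltjes moment sequence of polynomials, where $E_m(q)=\sum_{\sigma\in S_m}q^{\mathrm{des}(\sigma)}$. The only structural input needed is the elementary fact that $\mathrm{des}(\sigma)+\mathrm{ris}(\sigma)=m-1$ for every $\sigma\in S_m$, since each of the $m-1$ consecutive pairs $(\sigma_i,\sigma_{i+1})$ is counted by exactly one of $\mathrm{des}$ and $\mathrm{ris}$.

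First I would set $a_n(q):=E_{n+1}(q)$ for $n\ge 0$ and verify the degree hypothesis of Theorem \ref{H}: the decreasing permutation $(n+1)\,n\cdots 1\in S_{n+1}$ has exactly $n$ descents and no permutation in $S_{n+1}$ has more, so $E_{n+1}(q)$ has a nonzero coefficient on $q^{n}$ and no higher-degree term, i.e. $\deg(a_n(q))=n$ for all $n\ge 0$. Since Wang and Zhu's theorem gives that $\alpha:=(a_n(q))_{n\ge 0}$ is a Stieltjes moment sequence of polynomials, Theorem \ref{H} (applied with the single variable $q$) tells us that $H_{x_0}(\alpha)=(H_{x_0}(a_n(q)))_{n\ge 0}$ is a Stieltjes moment sequence of polynomials in the two variables $x_0,q$.

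Next I would identify the homogenizations explicitly. Renaming $x_0$ as $p$, we have
\[
H_{p}(a_n(q))=p^{\,n}\,E_{n+1}(q/p)=\sum_{k=0}^{n}E_{n+1,k}\,q^{k}p^{\,n-k},
\]
where $E_{n+1,k}$ is the number of permutations of $S_{n+1}$ with $k$ descents, equivalently with $n-k$ rises. Using $\mathrm{des}(\sigma)+\mathrm{ris}(\sigma)=n$ for $\sigma\in S_{n+1}$, this equals $\sum_{\sigma\in S_{n+1}}q^{\mathrm{des}(\sigma)}p^{\,n-\mathrm{des}(\sigma)}=\sum_{\sigma\in S_{n+1}}p^{\mathrm{ris}(\sigma)}q^{\mathrm{des}(\sigma)}$. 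Hence $\bigl(\sum_{\sigma\in S_{n+1}}p^{\mathrm{ris}(\sigma)}q^{\mathrm{des}(\sigma)}\bigr)_{n\ge 0}$ is a Stieltjes moment sequence of polynomials, and reindexing $n\mapsto n-1$ gives the statement for $n\ge 1$.

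I do not expect any serious obstacle: the argument is a direct application of Theorem \ref{H}, in the same spirit as Examples 3.6--3.8. The one point that requires care is checking that $\deg(E_{n+1}(q))$ is exactly $n$ — needed to meet the hypothesis of Theorem \ref{H} — which is why the decreasing-permutation observation is recorded, together with keeping the index shift between Wang and Zhu's $(E_{n+1}(q))_{n\ge 0}$ and the $n\ge 1$ indexing of the proposition consistent.
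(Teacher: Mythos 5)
Your proposal is correct and follows essentially the same route as the paper: Example 3.9 likewise cites Wang and Zhu for the $q$-Eulerian sequence $(E_{n+1}(q))_{n\ge 0}$ and then invokes Theorem \ref{H} to homogenize, using $\mathrm{des}(\sg)+\mathrm{ris}(\sg)=n$ for $\sg\in S_{n+1}$ to identify $H_p(E_{n+1}(q))$ with $\sum_{\sg\in S_{n+1}}p^{\mathrm{ris}(\sg)}q^{\mathrm{des}(\sg)}$. Your explicit verification that $\deg(E_{n+1}(q))=n$ and your careful handling of the index shift are points the paper leaves implicit, but they do not constitute a different argument.
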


It is easy to obtain a generating function for the $E_{n}(p,q)$s.
That is, it is well-known that
$$1 + \sum_{n\geq 1} \frac{x^n}{n!} \sum_{\sg \in S_n} q^{\mathrm{des}(\sg)}
= \frac{q-1}{q -e^{x(q-1)}}.$$
See \cite{EC2}.
Replacing $x$ by $px$ and $q$ by $q/p$ in this generating function
we obtain that
$$1 + \sum_{n\geq 1} \frac{x^n}{n!} \sum_{\sg \in S_n}
q^{\mathrm{des}(\sg)} p^{\mathrm{ris}(\sg)+1}
= \frac{q-p}{q -pe^{x(q-p)}}.$$
Thus
$$\sum_{n\geq 1} \frac{x^n}{n!} \sum_{\sg \in S_n}
q^{\mathrm{des}(\sg)} p^{\mathrm{ris}(\sg)} =\frac{e^{x(q-p)}-1}{q-pe^{x(q-p)}}.$$

\end{document}